\documentclass[12pt]{amsart}

\usepackage{amssymb,amsfonts,amsthm,multicol,amscd}
\usepackage{amsmath,amsthm,amscd}
\usepackage[mathscr]{eucal}
\allowdisplaybreaks

\usepackage[curve,matrix,arrow]{xy}

\newcommand{\q}{\mathfrak{q}}

\newcommand{\repX}{\rep{X}}
\newcommand{\repV}{\rep{V}}
\newcommand{\repP}{\rep{P}}

\newcommand{\UresSL}[1]{\overline{\mathscr{U}}_{\q} s\ell(#1)} 
\newcommand{\LUresSL}[1]{\overline{\mathscr{LU}}_{\q} s\ell(#1)}

\newcommand{\rep}{\mathscr} 
\newcommand{\qrep}{\mathsf}

\renewcommand{\atop}[2]{\genfrac{}{}{0pt}{}{#1}{#2}}
\newcommand{\qbin}[3]{\left[\atop{#1}{#2}\right]_{#3}}
\newcommand{\qsuper}[3]{\left[\!\!\left[\atop{#1}{#2}\right]\!\!\right]_{#3}}

\setlength{\textheight}{24.3cm} 
\topmargin= -13mm
\setlength{\textwidth}{17cm} 
\hoffset -2.5cm

\newtheorem{Thm}{Theorem}[section]

\newtheorem{Prop}[Thm]{Proposition}

\newtheorem{rem}[Thm]{Remark}
\newtheorem{Conjecture}[Thm]{Conjecture}

\theoremstyle{definition}

\newtheorem{Rem}[Thm]{Remark}

\newcommand{\theA}{\mathsf{A}}
\newcommand{\vect}[1]{{\mathbf{#1}}}



  
\newcommand{\half}{\frac{1}{2}}


\def\be            {\begin{equation}}
\def\bearl         {\begin{array}{l}}
\def\bearll        {\begin{array}{ll}}
\def\bearlll       {\begin{array}{lll}}

\def\dd            {\partial}

\def\ee            {\end{equation}}

\def\eear          {\end{array}}

\newcommand\Frac[2]{\mbox{\large$\frac{#1}{#2}$}}
\def\futnote#1     {\,\footnote{~#1}\ }

\def\ii            {{\rm i}}

\long\def\labl#1#2 {\label{#1#2}\ifnum\draftcontrol=1
                   \mbox{ }\\[-12 mm]\query{#1#2}\\[5 mm] \fi}
\long\def\Labl#1   {\label{#1}\ee \ifnum\draftcontrol=1
                   \mbox{ }\\[-12 mm]\query{#1}\\[5 mm] \fi}

\newcommand{\setI}{\mathcal{I}}

\newcommand{\Coinv}{\mathrm{Coinv}}

\newcommand{\mfrac}[2]{\mbox{\small$\displaystyle\frac{#1}{#2}$}}
\newcommand{\ffrac}[2]{\raisebox{1pt}{\mbox{\footnotesize$\displaystyle\frac{#1}{#2}$}}}

\newcommand{\algW}{\mathcal{W}}
\newcommand{\algA}{\mathcal{A}}
\newcommand{\algB}{\mathcal{B}}
\newcommand{\algL}{\mathcal{L}}

\newcommand{\oC}{\mathbb{C}}
\newcommand{\oN}{\mathbb{N}}

\newcommand{\oZ}{\mathbb{Z}}

\newcommand{\oNo}{\oN_{\mathrm{odd}}}
\newcommand{\oNe}{\oN_{\mathrm{even}}}

\newcommand{\catC}{\mathfrak{C}}
\newcommand{\catA}{\mathfrak{A}}

\newcommand{\repM}{\mathscr{M}}
\newcommand{\repL}{\mathscr{L}}

\newcommand{\dtensor}{\dot\tensor}


\newcommand\ket[1]{|#1\rangle}
\newcommand\version[1] {\ifnum\draftcontrol=1 \typeout{}\typeout{#1}\typeout{}
                   \vskip3mm \centerline{\fbox{{\tt DRAFT -- #1 -- }
                   {\small\draftdate}}} \vskip3mm \fi}

\def\tensor{\otimes}

\def\be{\beta}








\begin{document}

\title[Characters of coinvariants]{%
  \vspace*{-4\baselineskip}
  \mbox{}\hfill\texttt{\small\lowercase{q-alg}/\lowercase{0805.4096}}
  \\[\baselineskip]Characters of coinvariants in (1,p) logarithmic models}
\author{B.L.~Feigin, and I.Yu.~Tipunin}
\address{BLF:Higher School of Economics, Moscow, Russia and Landau institute for Theoretical Physics,
Chernogolovka, 142432, Russia}
\email{bfeigin@gmail.com}
\address{IYuT:Tamm Theory Division, Lebedev Physics Institute, Leninski pr., 53,
Moscow, Russia, 119991}
\email{tipunin@gmail.com} 

\maketitle

\begin{abstract}
 We investigate induced modules of doublet algebra in (1,p) logarithmic models.
We give fermionic formulas for the characters of induced modules and coinvariants
with respect to different subalgebras calculated in the irreducible modules.
The characters of coinvariants give multiplicities of projective modules
in fusion of induced modules.
\end{abstract}

\section{Introduction}
Vertex--operator algebras (VOAs) in logarithmic conformal field theory are very popular
last several years. The simplest family of such algebras $\algW(p)$ was introduced
in~\cite{[Kausch]} and was investigated in~\cite{[GK2]} and~\cite{M.F}.
Algebras $\algW(p)$ admit a $SL(2)$ action by symmetries. Invariants
of this action is the universal enveloping of the Virasoro algebra with
the central charge
\begin{equation}\label{c-charge}
  c=13-6p-\frac{6}{p}.
\end{equation}
In this sence $\algW(p)$ is the ``Galois extension'' of the Virasoro algebra and 
$SL(2)$ plays a role of the Galois group.
The algebra $\algW(p)$ is called triplet algebra~\cite{[GK2]} because it 
is generated by $SL(2)$ triplet of fields.
The Virasoro algebra with the central charge \eqref{c-charge} has infinite dimensional conformal
blocks but after taking the extension they become finite dimensional and the conformal field
theory becomes close to a rational one~\cite{M.F1,CF}. The only difference from rationality is that the
category of representations is not semisimple. (From mathematical point of view the nonsemisimplisity
of the representation category is equivalent to appearing of logarithms in the corellation functions.)

In the present paper we consider slightly larger ``Galois extension'' $\algA(p)$ that is generated by
a dublet (not a triplet like $\algW(p)$) of fields. We prefer $\algA(p)$ because the abelianization technique,
which allows us to calculate characters of induced representations and coinvariants
is easier applied to it than to $\algW(p)$. It gives fermionic formulas for the $\algA(p)$ 
characters~\cite{FFT} (see also~\cite{FGK}, where fermionic formulas for $\algW(p)$ characters
were obtained).
On the other hand the algebra $\algA(p)$ is worse
than $\algW(p)$ and strictly speaking is not a vertex operator 
algebra because it contains half integer powers of $z-w$
in Operator Product Expansions (OPEs). 
However, the representation theory of $\algA(p)$ is very similar with a representation theory
of an ordinary VOA and in what follows we don't bother on nonlocality of $\algA(p)$ strongly.
In the paper, we study conformal blocks or in other language coinvariants of~$\algA(p)$.
To formulate the main statement we should recall some facts and definitions about coinvariants of VOAs
(that we give in Sec.~\ref{coinv-gen}) and also about relations between representation categories
of VOAs and quantum group (that we give in Sec.~\ref{q-group-cat}).

\subsection{Coinvariants.\label{coinv-gen}}
Before we start a detailed consideration of $\algA(p)$, we
should say some general words about coinvariants.
General facts and references on VOAs can be found in~\cite{BD}.
Let $A$ be a vertex-operator algebra generated by currents $H^1(z)$, $H^2(z)$, \dots
Let $\repV$ be the vacuum module of $A$.
Let $\Sigma$ be a Riemann surface. 
Then we have a sheaf $\repV(\Sigma)$ on $\Sigma$ with the fiber $\repV$.
A set of sections over a small punctured neibourhood of a point $x$ generates an algebra $A_x$
and the currents $H^1(z)$, $H^2(z)$, \dots are generators of $A_x$, where $z$ is a local coordinate
at~$x$. These currents have decompositions
$H^i(z)=\sum_{n\in\oZ}H^i_nz^{-n-\Delta_i}$,
where $\Delta_i$ is the conformal dimension of $H^i$. For future convenience we
introduce the vector $\vect{\Delta}=(\Delta_1,\Delta_2,\dots)$ of conformal dimensions.
Let us fix $n$ points $z_1,\dots,z_n\in\Sigma$ and $n$ $A$-modules $\rep{V}_{z_1},\dots,\rep{V}_{z_n}$.
For each $z_i$ the module $\rep{V}_{z_1}$ is a module over $A_{z_i}$. Let $S_{z_1,\dots,z_n}$
be the space of sections of $\repV(\Sigma)$ regular outside the points $z_1,\dots,z_n$. 
The sections from $S_{z_1,\dots,z_n}$ act in $\rep{V}_{z_1}\tensor\dots\tensor\rep{V}_{z_n}$.
The quotient 
\begin{equation}
 \Coinv(\rep{V}_{1},\dots,\rep{V}_{n})=\rep{V}_{z_1}\tensor\dots\tensor\rep{V}_{z_n}/
  S_{z_1,\dots,z_n}\rep{V}_{z_1}\tensor\dots\tensor\rep{V}_{z_n}
\end{equation}
form a bundle over the configuration space of pairiwise distinct points
$z_1,\dots,z_n$ and is called coinvariants of $\rep{V}_{1},\dots,\rep{V}_{n}$.
(In the paper we consider only the cases where coinvariants are finite dimensional.)

We fix a subalgebra $A[\vect{u}]$ of $A$ generated by modes of $\{H^n(z)\}$,
$H^n_{-i_n-\Delta_n+m}$, $m\in\oN$, where $\vect{u}=(i_1,i_2,\dots)$. 
Let $\rep{M}_{\vect{u}}$ be induced module from the trivial 1-dimensional representation 
of $A[\vect{u}]$. $\rep{M}_{\vect{u}}$ contains a cyclic vector $\ket{\vect{u}}$ 
annihilated by
$H^n_{-i_n-\Delta_n+m}$, $m\in\oN$. In this notation the vacuum module corresponds to $\vect{u}=0$.
An induced module $\rep{M}_{\vect{u}}$ admits a grading by an operator $D$ commuting with $A$.
We define first the operator $d$ in such a way that $d\ket{\vect{u}}=0$ and $[d,H^i_n]=-nH^i_n$
for generators $H^i_n$ of $A$. Then, we put
\begin{equation}\label{theD}
 D=L_0-d,
\end{equation}
where $L_0$ is the zero mode of the Virasoro subalgebra from $A$, $D$ evidently commutes with $A$.

The covariant functor $\Coinv$ is representable. In particular, it means that there exists the 
module $\rep{V}_1\dtensor\rep{V}_2$ (which is called the fusion of $\rep{V}_1$ and $\rep{V}_2$)
such that for any module $\rep{V}_3$ we have 
\begin{equation}
 \Coinv(\rep{V}_{1},\rep{V}_{2},\rep{V}_3)=
   \left(Hom_{A_{z_3}}(\rep{V}_3,\rep{V}_1\dtensor\rep{V}_2)\right)^*.
\end{equation}
Using this property the module $\rep{V}_1\dtensor\rep{V}_2$ can be constructed as an inductive system in
the following way. For arbitrary sequence of vectors $\vect{u}$ such that each component of $\vect{u}$
tends to infinity, we consider the set of modules $\rep{M}_{\vect{u}}$ induced from conditions $\vect{u}$ 
and set
\begin{equation}
 \rep{V}[\vect{u}]^*=\Coinv(\rep{V}_{1},\rep{V}_{2},\rep{M}_{\vect{u}})=
   \left(Hom_{A_{z_3}}(\rep{M}_{\vect{u}},\rep{V}_1\dtensor\rep{V}_2)\right)^*.
\end{equation}
For each vector $\vect{u}$, by the Frobenius duality $\rep{V}[\vect{u}]\subset\rep{V}_1\dtensor\rep{V}_2$.
This means that $\rep{V}_1\dtensor\rep{V}_2$ is the inductive limit of the inductive system 
$\vect{u}$ enumerated by vectors $\vect{u}$.

For a wide class of VOAs and induced modules $\rep{M}_{\vect{u}}$ it is naturally to be expected 
 \begin{equation}\label{ind-tensor}
 \rep{M}_{\vect{u}_1}\dtensor\rep{M}_{\vect{u}_2}\simeq\rep{M}_{\vect{u}_1+\vect{u}_2}.
\end{equation}
A typical example when \eqref{ind-tensor} is satisfied is $\widehat{s\ell}(2)$ minimal models 
at positive integer
level and a class of modules $\rep{M}_{\vect{u}}$ \cite{FF}.
For Virasoro minimal models, modules $\rep{M}_j$ induced from vectors $\ket{j}$
satisfying $L_{-j-2+m}\ket{j}=0$ for $m\in\oN$ 
satisfy $\rep{M}_{j_1}\dtensor\rep{M}_{j_2}=\rep{M}_{j_1+j_2}$. This statement follows for 
$(2,p)$ models from~\cite{FJKLM} and is a conjecture in other cases.

For the algebra $\algA(p)$ we also conjecture the following statement and give in the paper
some reasons why it is true.\\
{\bf Statement.} \textit{For a class of highest-weight conditions described by some vectors $\vect{u}$
  (precise conditions on $\vect{u}$ are given in Sec.~\ref{sec:ind-mod-decomp})
  two induced modules $\rep{M}_{\vect{u}_1}$ and $\rep{M}_{\vect{u}_2}$ satisfy
 \begin{equation}\label{the-statement}
 \rep{M}_{\vect{u}_1}\dtensor\rep{M}_{\vect{u}_2}\simeq\rep{M}_{\vect{u}_1+\vect{u}_2}.
\end{equation}}

\subsection{Representation categories.\label{q-group-cat}}
The representation category $\catC$ of $\algW(p)$ is equivalent (as quasitensor category)
to the representation category of $\UresSL{2}$ with $\q=e^{i\pi/p}$ \cite{[FGST2]}. 
In particular $\rep{V}_1\dtensor\rep{V}_2$ corresponds to the tensor product of 
$\UresSL{2}$ representations corresponding to $\rep{V}_1$ and $\rep{V}_2$.
This statement can be obtained in the following way. In the representation category of
Virasoro algebra a subcategory equivalent to Lusztig quantum $s\ell(2)$ can be distingvished.
Then from general properties of VOAs it follows that after ``Galois extension'' the Lusztig
quantum $s\ell(2)$ reduces to $\UresSL{2}$.
The category $\catC$ contains $2p$ simple objects $\qrep{X}^\pm_r$, $1\leq r\leq p$
and $2p$ projective objects $\qrep{P}^\pm_r$, $1\leq r\leq p$. Projective objects
$\qrep{P}^\pm_r$, $1\leq r\leq p-1$ are not simple and $\qrep{P}^\pm_p\simeq\qrep{X}^\pm_p$.
We call $\qrep{X}^\pm_p$ Steinberg modules.

The representation category $\catA$ of $\algA(p)$ is not equvalent to a representation category of
a quantum group. However $\catA$ is obtained by a reduction of the tensor category $\catC$. The reduction
is such that simple objects from the pairs $\qrep{X}^\pm_r$ for each $r$ becomes indistingvishable, so $\catA$
contains $p$ simple (projective) objects $\qrep{X}_r$ ($\qrep{P}_r$), $1\leq r\leq p$ and 
$\qrep{P}_p\simeq\qrep{X}_p$. Many statements about $\algA(p)$ representations can be done in terms
of the category $\catA$.
In particular dimensions of coinvariants $\Coinv(\rep{V}_{1},\dots,\rep{V}_{n})$ can be calculated in
terms of the tensor category $\catA$. To do that one should take the tensor product of 
objects corresponding to $\rep{V}_{1}$,\dots, $\rep{V}_{n}$ and calculate the space of 
homomorphisms to the simple object $\qrep{X}_1$ ($\qrep{X}_1$ corresponds to trivial 1 dimensional representation). In what follows we use the same notation for $\algA(p)$-modules
and corresponding objects of $\catA$.

The simple objects $\qrep{X}_r\in\catA$ are induced $\algA(p)$-modules $\repM_{\vect{u}_r}$
for some $\vect{u}_r$, which are defined in Sec.~\ref{sec:ind-mod}. $\qrep{X}_1$ corresponds to the vacuum
$\algA(p)$-module and $\qrep{X}_1\dtensor\repP=\repP$, $\forall\repP\in\catA$.
We consider the module $\repM=\qrep{X}_2^{\dtensor n_2}\dtensor\qrep{X}_3^{\dtensor n_3}\dtensor\dots\dtensor
\qrep{X}_p^{\dtensor n_p}$. The module $\repM$ as an $\algA(p)$-module is the induced module 
$\repM=\repM_{n_2\vect{u}_2+n_3\vect{u}_3+\dots+n_p\vect{u}_p}$. Each induced module admits a commuting with
$\algA(p)$ grading \eqref{theD}, which means that the tensor product of simple objects of category $\catA$
admits the grading invariant with respect to the action of the algebra.\\
{The main result of the paper is the fermionic formulas for the characters corresponding to this grading.}\\
To obtain the fermionic formulas for the characters, we use the technique of abelianization.
The abelianization technique is a degeneration of the algebra to an algebra with greater number of
generators but with quadratic relations. In many cases the obtained algebra is abelian whence
the technique took its name. In our case the algebra obtained by application of the abelianization technique
is not abelian but is very close to an abelian one.

The structure of the formulas for characters is similar with the structure of 
fermionic formulas for Kostka polynomials (see Sec.~\ref{sec:char-kostka}).

We give several examples for $p=3$. Some low powers of ``two dimensional representation'' have the following
decompositions 
\begin{align}
\qrep{X}_2^{\dtensor2}&= \qrep{X}_1 +\qrep{X}_3,\\
 \qrep{X}_2^{\dtensor3}&= q^{-1}\qrep{X}_2
   + \qrep{P}_2,\\
 \qrep{X}_2^{\dtensor4}&= q^{-2}\qrep{X}_1+ \qrep{P}_1
        +(q^{-2}+q^{-1}+1)\qrep{X}_3,\\
  \qrep{X}_2^{\dtensor5}&=q^{-4} \qrep{X}_2
    +(q^{-3}+q^{-2}+q^{-1}+1)\qrep{P}_2+q^{-\frac{3}{4}}(z+z^{-1})
      \qrep{X}_3,\\
   \qrep{X}_2^{\dtensor6}&=q^{-6}\qrep{X}_1
           +(q^{-4}+q^{-3}+q^{-2}+1)\qrep{P}_1
         +q^{-\frac{5}{4}}(z+z^{-1})\qrep{P}_2
            &\notag\\
        &\qquad\qquad\qquad+(q^{-6}+q^{-5}+2q^{-4}+q^{-3}+2q^{-2}+q^{-1}+1)\qrep{X}_3,
\end{align}
where we write characters of multiplicity spaces in the right hand sides.
The variable $q$ corresponds to the grading discussed above and $z$ corresponds
to the Cartan generator of the $s\ell(2)$ symmetry.

The paper is organized as follows. In Sec.~\ref{sec:gen}, we introduce notations
and recall well known facts about $(1,p)$ models. In Sec.~\ref{sec:catA},
we describe the representation category $\catA$. In Sec.~\ref{sec:induced},
we investigate induced modules of $\algA(p)$. In Sec.~\ref{sec:char-kostka},
we calculate characters of coinvariants in irreducible modules.

\section{General facts about $(1,p)$ models\label{sec:gen}}
The $(1,p)$ models of logarithmic conformal field theory can be
formulated in terms of Coulomb gas. Let $\varphi$ denote the free
scalar field with the OPE $\varphi(z)\,\varphi(w) = \log(z{-}w)$.
Throughout the paper we use the standard notation
\begin{equation}
  \alpha_+=\sqrt{2p}\,,\qquad\alpha_-=-\sqrt{\frac{2}{p}}\,,\qquad \alpha_+\alpha_-=-2,
  \qquad \alpha_0=\alpha_++\alpha_-=\sqrt{\frac{2}{p}}(p-1),
\end{equation}
where $p$ is a positive integer grater than 1.
In what follows,
we drop the symbol of normal ordering in all functionals in $\varphi$.
We consider the screening operator
\begin{equation}\label{F-qalg}
F = \Frac{1}{2\pi\ii}\oint dz e^{\alpha_-\varphi(z)}
\end{equation}
commuting with the Virasoro algebra corresponding to
the energy--momentum tensor
\begin{equation}\label{the-emt}
  T=\frac{1}{2}\,{\dd\varphi\,\dd\varphi} 
  + \frac{\alpha_0}{2}\, \dd^2\varphi.
\end{equation}
 with the central charge~\eqref{c-charge}.
We consider the lattice VOA $\algB(p)$ generated by $e^{\pm\alpha_+\varphi(z)}$.
The screening $F$ acts from the vacuum module of $\algB(p)$ to another irreducible $\algB(p)$-module.
The vacuum module of $\algW(p)$ is the kernel of $F$ in the vacuum module of $\algB(p)$. So,
$\algW(p)$ is a subalgebra of~$\algB(p)$.

\subsection{The doublet algebra}
The algebra $\algA(p)$ has a similar description.
We consider the lattice VOA $\algL(p)$ corresponding
to the 1-dimensional lattice generated by the vector
$v$, $(v,v)=p/2$. The VOA $\algL(p)$ is generated by two vertex 
operators $e^{\pm\frac{\alpha_+}{2}\varphi(z)}$ (see \cite{FHST}). At this point
we should make a remark that $\algL(p)$ is not strictly speaking
a vertex-operator algebra because whenever $p$ is odd some OPEs
contain nonlocal expressions $(z-w)^{\frac{1}{2}}$. However,
one can work with $\algL(p)$ like with a VOA. The representation category of $\algL(p)$
is semisimple and contains
$p$ irreducible representations $\rep{Y}_s$ for $1\leq s\leq p$.
The module $\rep{Y}_s$ is generated from the vertex operator
  $V_{1,s}=e^{\frac{s-1}{2}\alpha_-\,\varphi(z)}$
and contains also the vertices
\begin{equation}
  V_{r,s}=e^{-(\frac{r-1}{2}\alpha_++\frac{s-1}{2}\alpha_-)\,\varphi(z)},
\qquad r\in\oZ.
\end{equation}
The vacuum module of $\algL(p)$ is $\rep{Y}_1$
 and the screning $F$ acts from it
to $\rep{Y}_{p-1}$. We define the vacuum 
module $\qrep{X}_1$ of $\algA(p)$ (which is equivalent to the definition of $\algA(p)$) as a kernel 
of~$F$ calculated in~$\rep{Y}_1$.

The second screning of the Virasoro algebra \eqref{the-emt}
\begin{equation}
  e= \Frac{1}{2\pi\ii}\oint dz e^{\alpha_+\varphi(z)}
\end{equation}
acts in the vacuum module $\rep{Y}_1$ of $\algL(p)$.
The action of $e$ can be restricted to $\qrep{X}_1$ (the vacuum module of $\algA(p)$),
where it is one of the $s\ell(2)$ algebra generators. The generator $f$ 
can be constructed from $F$ as a divided power ``$F^p/[p]!$''.

The algebra $\algA(p)$ is 
generated by the $s\ell(2)$ doublet of fields
\begin{equation}\label{defa}
  a^+(z)=e^{-\frac{\alpha_+}{2}\varphi(z)}\,,\qquad
  a^-(z)=[e,a^+(z)]=D_{p-1}(\dd\varphi(z))e^{\frac{\alpha_+}{2}\varphi(z)},
\end{equation}
where $D_{p-1}$ is a degree $p-1$ differential polynomial
in~$\dd\varphi(z)$. The conformal dimension of these fields is
$\frac{3p-2}{4}$.  The fields $a^\pm(z)$ have the following OPEs
\begin{align}
  a^+(z)a^+(w)&\sim(z-w)^{\frac{p}{2}},\notag\\
  a^-(z)a^-(w)&\sim(z-w)^{\frac{p}{2}},\label{mainOPE}\\
  a^+(z)a^-(w)&=(z-w)^{-\frac{3p-2}{2}}\sum_{n\geq0}
  (z-w)^{n}H^n(w)\notag
\end{align}
where $H^n(w)$ are fields with conformal dimension equals to~$n$.  The field
$H^0$ is proportional to the identity field $1$, $H^1=0$, $H^2$ is proportional to
the energy--momentum tensor~$T$. About other fields $H^n$ we can say the following
\begin{gather}
  \label{H2n}
  H^{2n}=c_{2n} :T^n:+P_{2n}(T),\qquad1\leq n\leq p-1,\\
  H^{2n+1}= c_{2n+1}\partial:T^n:+P_{2n+1}(T),\qquad1\leq n\leq p-2,\\
  H^{2p-1}=c_{2p-1}\partial:T^{p-1}:+P_{2p-1}(T)+d_1W^0,\\
  \label{H2p}
  H^{2p}=c_{2p}:T^p:+P_{2p}(T)+d_2\partial W^0,
\end{gather}
where $:T^n:$ is the normal ordered $n$-th power of the
energy--momentum tensor, $P_{n}(T)$ is a differential polynomial in
$T$ and degree of both $P_{2n}(T)$ and $P_{2n+1}(T)$ in~$T$ is equal
to $n-1$, $W^0(z)=[e,e^{-\alpha_+\varphi(z)}]$ and $c_n$,
$d_1$, $d_2$ are some nonzero constants.

In what follows we choose the system of $p+1$ generators of $\algA(p)$ in the form
\begin{equation}
 a^+,a^-,H^2,H^4,\dots,H^{2p-2}.
\end{equation}
The corresponding vector of conformal dimensions is
\begin{equation}\label{vectDelt}
 \vect{\Delta}=(\frac{3p-2}{4},\frac{3p-2}{4},2,4,\dots,2p-2).
\end{equation}

We defined the algebra $\algA(p)$ as a subalgebra in $\algL(p)$ with
embedding given by~\eqref{defa}. We note that there is another embedding
$\algA(p)\hookrightarrow\algL(p)$ given by
\begin{equation}\label{defa-1}
  a^+(z)=e^{\frac{\alpha_+}{2}\varphi(z)}\,,\qquad
  a^-(z)=[\bar e,a^+(z)],
\end{equation}
where $\bar e=\Frac{1}{2\pi\ii}\oint dz e^{-\alpha_+\varphi(z)}$.

\subsection{Irreducible modules of $\algA(p)$}
The irreducible representations of $\algA(p)$ are described in~\cite{FFT}.
There are $p$ irreducible representations but before we describe them we make
several notations.
The vertex operator algebra $\algA(p)$ is graded (by eigenvalues of
the zero mode of $\partial\varphi$)
\begin{equation}
  \algA(p)=\bigoplus_{\beta\in\frac{\alpha_+}{2}\oZ}\algA(p)^\beta
\end{equation}
and $a^{\pm}(z)\in\algA(p)^{\pm\frac{\alpha_+}{2}}$.  We consider only
the graded representations of~$\algA(p)$.  For any representation
$\repX=\oplus_{t\in\oC}\repX^t$ we have
$a^{\pm}(z):\repX^t\to\repX^{t\pm\frac{\alpha_+}{2}}$ and $a^{\pm}(z)$
acting in $\repX^t$ have the decomposition
\begin{equation}
\label{modes}
  a^{\pm}(z)=\sum_{n\in\pm t\frac{\alpha_+}{2}-\frac{3p-2}{4}+\oZ}
                     z^{-n-\frac{3p-2}{4}}a^{\pm}_n.
\end{equation}
We note that $t$ in fact is not arbitrary but takes the the values 
$t=\frac{\alpha_-}{2}n$, $n\in\oZ$.

The irreducible $\algA(p)$-modules can be constructed in terms of irreducible
modules of lattice VOA $\algL(p)$. Some powers of the screening operator $F$ 
act between irreducible $\algL(p)$-modules and form the Felder complex
\begin{equation}\label{felder-res}
 \dots\stackrel{F^{p-s}}{\to}\rep{Y}_s\stackrel{F^s}{\to}\rep{Y}_{p-s}\stackrel{F^{p-s}}{\to}
  \rep{Y}_s\stackrel{F^s}{\to}\dots.
\end{equation}
The complex is exact and 
the kernel of $F^s$ in $\rep{Y}_s$ is irreducible 
$\algA(p)$-module~$\qrep{X}_{s}$.
The irreducible representation $\qrep{X}_{s}$ of $\algA(p)$ is
a highest-weight module generated from the vector
$\ket{s}\in\qrep{X}_{s}^{\frac{1-s}{2}\alpha_-}$ satisfying
\begin{equation}\label{cond}
  a^\pm_{-\frac{3p-2s}{4}+n}\ket{s}=0,\qquad n\in\oN,~
1\leq s\leq p.
\end{equation}
The conformal dimension of $\ket{s}$ is
$\Delta_{1,s}=\frac{s^2-1}{4p}+\frac{1-s}{2}$.  The highest modes of
$a^\pm(z)$ that generate non zero vectors from $\ket{s}$ are
\begin{equation}
  a^\pm_{-\frac{3p-2s}{4}},~1\leq s\leq p
\end{equation}
as it shown 
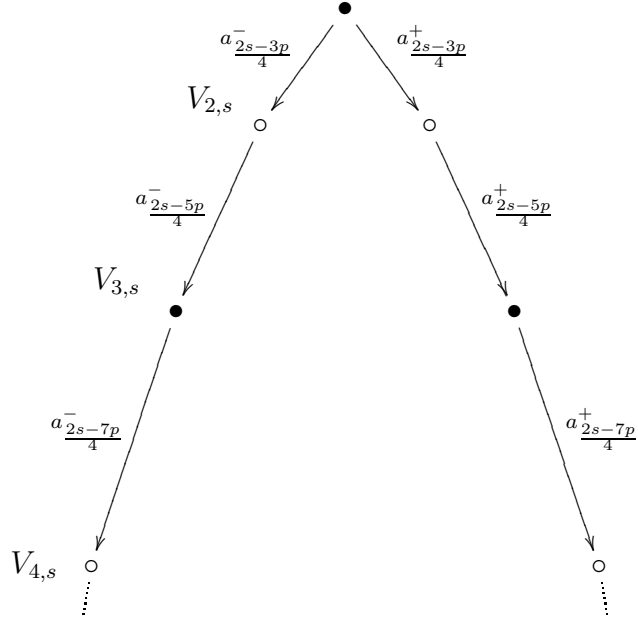
\begin{figure}[tb]
  \mbox{}~\quad~
  \xymatrix@=6pt{%
    &&&&&&&&\\
   &&&&&&&{\bullet}
    \ar@{}[-1,0]|(.8){}
    \ar[3,-2]_{a^-_{\frac{2s-3p}{4}}}
    \ar[3,2]^{a^+_{\frac{2s-3p}{4}}}\\
    &&&&&&&&\\
    &&&&&&&&\\
    &&&&&{\circ}\ar[5,-2]_{a^-_{\frac{2s-5p}{4}}}
    \ar@{}[-1,-2]|(.6){\displaystyle V_{2,s}}
    &&&&{\circ}\ar[5,2]^{a^+_{\frac{2s-5p}{4}}}\\
    &&&&&&&&&&&&\\
    &&&&&&&&&&&&&\\
    &&&&&&&&&&&&\\
    &&&&&&&&&&&&&\\
    &&&{\bullet}\ar[7,-2]_{a^-_{\frac{2s-7p}{4}}}
   \ar@{}[-1,-2]|(.7){\displaystyle V_{3,s}}
    &&&&&&&&{\bullet}\ar[7,2]^{a^+_{\frac{2s-7p}{4}}}\\
    &&&&&&&&&&&&&&&&\\
    &&&&&&&&&&&&&&&&\\
    &&&&&&&&&&&&&&&&\\
    &&&&&&&&&&&&&&&&\\
    &&&&&&&&&&&&&&&&\\
    &&&&&&&&&&&&&&&&\\
    &{\circ}\ar@{.}[];[]+<-3pt,-18pt>
    \ar@{}[0,-1]-<20pt,0pt>|(.6){\displaystyle V_{4,s}}
    &&&&&&&&&&&&{\circ}
    \ar@{.}[];[]+<3pt,-18pt>
    }
  \caption[The  irreducible $\algA(p)$ modules]
  {\small
    {The irreducible $\algA(p)$ module $\qrep{X}_s$.}  The filled dot on the top is the cyclic vector
    $\ket{s}$. The arrows show the action of highest modes of
    $a^\pm$ that give nonzero vectors.  Filled (open) dots denote
    vertices belonging to the triplet algebra $\algW(p)$ 
    representations $\qrep{X}^+_s$ ($\qrep{X}^-_s$).}
 \label{fig:LambdaPi}
\end{figure}
in Fig.~\ref{fig:LambdaPi}.
Proceeding further we obtain the set of extremal vectors shown in Fig.~\ref{fig:LambdaPi}.

We let $\repL_{r,s;p}$ denote the irreducible module
of the Virasoro algebra with the central charge \eqref{c-charge} and with the highest weight
\begin{equation}\label{d-conf}
  \Delta_{r,s} = 
  \frac{p}{4}(r^2-1) + \frac{1}{4p}(s^2-1) +
  \frac{1 - rs}{2},~~1\leq s\leq p,~r\in\oZ.
\end{equation}
We note that $\repL_{r,s;p}$ is the quotient of the Verma module by the
submodule generated from one singular vector on the level $rs$ and
such modules exhaust irreducible Virasoro modules that aren't Verma
modules.

The action of the $s\ell(2)$ algebra in $\qrep{X}_{s}$ is defined similarily to the 
action (see \eqref{defa}) in the vacuum module and
$\qrep{X}_{s}$ as a representation of
$s\ell(2)\oplus\mathrm{Vir}$ decomposes as
\begin{equation}\label{decomp}
  \qrep{X}_{s}=\oplus_{n\in\oN}\pi_{n}\tensor\repL_{n,s;p},
\end{equation}
where $\pi_{n}$ is the $n$-dimensional irreducible $s\ell(2)$ representation.

The $\algL(p)$-module $\rep{Y}_s$ admits two $\algA(p)$-module structures $\rep{Y}^+_s$
and  $\rep{Y}^-_s$ corresponding to the embeddings~\eqref{defa} and~\eqref{defa-1} respectively.
We call modules $\rep{Y}^\pm_s$ the Verma modules of $\algA(p)$ because they correspond
to Verma modules in the tensor category~$\catA$ (See detailes in Sec.~\ref{sec:linkage}).
The modules $\rep{Y}^\pm_s$ are highest-weight modules generated from highest-weight
vectors $\ket{s}^\pm$ satisfying
\begin{equation}\label{hw-verma}
\begin{split}
 a^+_{-\frac{3p-2s}{4}+n}\ket{s}^+=a^-_{-\frac{3p-2s}{4}+p-s+n}\ket{s}^+=0,\\
a^+_{-\frac{3p-2s}{4}+p-s+n}\ket{s}^-=a^-_{-\frac{3p-2s}{4}+n}\ket{s}^-=0,
\end{split}
\qquad n\in\oN,\quad
1\leq s\leq p.
\end{equation}
We note that $\rep{Y}^+_s$ is not isomorphic to $\rep{Y}^-_s$ excepting the case $s=p$, where
two highest-weight conditions in~\eqref{hw-verma} coincide.

From the Felder complex \eqref{felder-res}, we obtain 4 resolutions for the irreducible
representation $\qrep{X}_s$
\begin{equation}\label{felder-res-pm}
 \begin{split}
  \to\rep{Y}^+_s\stackrel{F^s}{\to}\rep{Y}^+_{p-s}\stackrel{F^{p-s}}{\to}\qrep{X}_s\to0,\qquad &
   0\to\qrep{X}_s\stackrel{F^{p-s}}{\to}\rep{Y}^+_s\stackrel{F^{s}}{\to}\rep{Y}^+_{p-s}\to,\\
\to\rep{Y}^-_s\stackrel{F^{p-s}}{\to}\rep{Y}^-_{p-s}\stackrel{F^{s}}{\to}\qrep{X}_s\to0,\qquad &
   0\to\qrep{X}_s\stackrel{F^{s}}{\to}\rep{Y}^-_s\stackrel{F^{p-s}}{\to}\rep{Y}^-_{p-s}\to,
 \end{split}
\end{equation}
where resolutions in each row are contragredient to each other.

\section{Structure of the category $\catA$\label{sec:catA}}
\subsection{Linkage classes and indecomposable modules\label{sec:linkage}}
The representation category $\catA$ of $\algA(p)$ is 
a direct sum 
\begin{equation}
  \catA=\bigoplus_{n=0}^{[p/2]}\catA_n,
\end{equation}
where $\catA_n$ are full subcategories and there are no morphisms
between elements from different~$\catA_n$.

Category $\catA_0$ is semisimple and contains the only indecomposable
object $\qrep{X}_{p}$. Each category $\catA_n$, $n>0$ (excluded
$\catA_{p/2}$ for even $p$) contains two simple objects
$\qrep{X}_{s}$ and~$\qrep{X}_{p-s}$. Category $\catA_{p/2}$ for even
$p$ contains one simple object~$\qrep{X}_{p/2}$.

To each irreducible module $\qrep{X}_{s}$, the projective cover
$\qrep{P}_{s}$ corresponds. For $1\leq s\leq p-1$, the projective
module $\qrep{P}_{s}$ consists of 4 subquotients (two $\qrep{X}_{s}$
and two $\qrep{X}_{p-s}$) and $\qrep{P}_{p}= \qrep{X}_{p}$.
Schematically the structure of $\qrep{P}_{s}$ for $1\leq s\leq p-1$ is shown
in the following diagram
\begin{equation}\label{schem-proj}
  \xymatrix@=12pt{
    &
    &\stackrel{\qrep{X}_s}{\bullet}
    \ar@/^/[dl]
    \ar@/_/[dr]
    &\\
    &\stackrel{\qrep{X}_{p{-}s}}{\bullet}\ar@/^/[dr]
    &
    &\stackrel{\qrep{X}_{p{-}s}}{\bullet}\ar@/_/[dl]
    \\
    &&\stackrel{\qrep{X}_{s}}{\bullet}&
  }
\end{equation}
This digram corresponds to the Jordan--Holder series
$0\to\rep{W}\to\qrep{P}_s\to\qrep{X}_s\to0$, $0\to\repV\to\rep{W}\to\qrep{X}_{p-s}\to0$,
$0\to\qrep{X}_s\to\repV\to\qrep{X}_{p-s}\to0$.

As it was explained in the Introduction, the quasitensor category of $\algA(p)$ modules
is equivalent to the quotient of $\UresSL{2}$ representation category with respect to
relations~$\qrep{X}^+_s\sim\qrep{X}^-_s$. Let $U_q(B^\pm)$ be universal enveloping
of two Borel subalgebras in $\UresSL{2}$. Then $\rep{Y}^\pm_s$ correspond
to Verma modules induced from $U_q(B^\pm)$.


\subsection{Quasitensor structure}
The category $\catA$ is quasitensor category. 
Tensor products of irreducible modules decomposes into direct sum of irreducible
and projective modules. Direct sums of irreducible and projective modules
are tilting modules~\cite{tilting} in the category $\catA$.
We have the following tensor products
of these modules 
\begin{align}
  \qrep{X}_r\dtensor\qrep{X}_s&=
  \mathop{\bigoplus\nolimits'}_{j=|r-s|+1}^{{\mathrm{min}}(r+s-1,2p-r-s-1)}\qrep{X}_j
  \,\,\oplus\,\,\mathop{\bigoplus\nolimits'}_{2p-r-s+1}^{p}\qrep{P}_j,\\
 \qrep{X}_r\dtensor\qrep{P}_s&=
\begin{cases}
   \mathop{\bigoplus\nolimits'}\limits_{j=s-r+1}^{s+r-1}\qrep{R}_{j},&r\leq s,\quad r+s\leq p,\\
\mathop{\bigoplus\nolimits'}\limits_{j=s-r+1}^{2p-s-r-1}\qrep{R}_{j}\oplus
    2\mathop{\bigoplus\nolimits'}\limits_{j=2p-s-r+1}^{p}\qrep{R}_{j},&r\leq s,\quad r+s> p,\\
\mathop{\bigoplus\nolimits'}\limits_{j=r-s+1}^{2p-s-r-1}\qrep{R}_{j}\oplus
    2\mathop{\bigoplus\nolimits'}\limits_{j=p+s-r+1}^{p}\qrep{R}_{j},&r> s,\quad r+s\leq p,\\
\mathop{\bigoplus\nolimits'}\limits_{j=r-s+1}^{2p-s-r-1}\qrep{R}_{j}\oplus
    2\mathop{\bigoplus\nolimits'}\limits_{j=2p-s-r+1}^{p}\qrep{R}_{j}\oplus
    2\mathop{\bigoplus\nolimits'}\limits_{j=p+s-r+1}^{p}\qrep{R}_{j},&r> s,\quad r+s> p,
\end{cases}
\\
  \qrep{P}_r\dtensor\qrep{P}_s&=
\begin{cases}
   2\!\!\!\!\mathop{\bigoplus\nolimits'}\limits_{j=s-r+1}^{s+r-1}\qrep{R}_{j}\oplus
2\!\!\!\!\mathop{\bigoplus\nolimits'}\limits_{j=p-r-s+1}^{p+r-s-1}\qrep{R}_{j}\oplus
    4\!\!\!\!\mathop{\bigoplus\nolimits'}\limits_{j=p+r-s+1}^{p}\qrep{R}_{j}\oplus
    4\!\!\!\!\mathop{\bigoplus\nolimits'}\limits_{j=s+r+1}^{p}\qrep{R}_{j},&r+s\leq p,\\
   2\!\!\!\!\mathop{\bigoplus\nolimits'}\limits_{j=s-r+1}^{2p-s-r-1}\qrep{R}_{j}\oplus
2\!\!\!\!\mathop{\bigoplus\nolimits'}\limits_{j=r+s-p+1}^{p+r-s-1}\qrep{R}_{j}\oplus
    4\!\!\!\!\mathop{\bigoplus\nolimits'}\limits_{j=2p-r-s+1}^{p}\qrep{R}_{j}\oplus
    4\!\!\!\!\mathop{\bigoplus\nolimits'}\limits_{j=p+r-s+1}^{p}\qrep{R}_{j},&r+s> p,
\end{cases}\quad r\leq s,
\end{align}
where we used notation $\qrep{R}_{j}=\qrep{P}_{j}$ for $1\leq j\leq p-1$ 
and $\qrep{R}_{p}=2\qrep{P}_{p}$ and $\bigoplus'$ is a direct sum with 
step $2$ (for example $\mathop{\bigoplus\nolimits'}_{j=0}^{2n}\qrep{R}_{j}
=\qrep{R}_{0}\oplus\qrep{R}_{2}\oplus\dots\oplus\qrep{R}_{2n}$).

For a vector $\vect{n}$ with nonnegative integer components $n_2,n_3,\dots,n_p$, we consider a
decomposition of the tensor product
  \begin{equation}\label{ind-mod-decomp}
 \qrep{X}_2^{\dtensor n_2}\dtensor\qrep{X}_3^{\dtensor n_3}\dtensor\dots
  \dtensor\qrep{X}_p^{\dtensor n_p}=
   \bigoplus_{s=1}^{p-1}\repV_s[\vect{n}]\boxtimes\qrep{X}_{s}
 \bigoplus\,
  \bigoplus_{s=1}^p\repX_s[\vect{n}]\boxtimes\qrep{P}_{s},
  \end{equation}
  where $\repV_s[\vect{n}]$ and $\repX_s[\vect{n}]$ are vector spaces of multiplicities
  of the irreducible 
  and projective modules respectively in the direct sum.
The dimensions of these spaces are
$\hat{N}_s[\vect{n}]=\mathrm{dim}\repV_s[\vect{n}]$ and
  $\bar{N}_s[\vect{n}]=\mathrm{dim}\repX_s[\vect{n}]$. 
  \begin{rem}
For example, we give decompositions for $p=2$ 
\begin{equation}
  \qrep{X}_2^{\dtensor n}=2^{n-2}\bigl(1+(-1)^{n-1}\bigr)\qrep{X}_2+
    2^{n-3}\bigl(1+(-1)^{n}\bigr)\qrep{P}_1
\end{equation}    
and $p=3$
\begin{gather}
\qrep{X}_2^{\dtensor n}=\frac{1}{2}\bigl(1+(-1)^n\bigr)\qrep{X}_1+\frac{1}{2}\bigl(1-(-1)^n\bigr)\qrep{X}_2
  +\frac{1}{9}\bigl(2^n+(-1)^n(3n-1)\bigr)\qrep{X}_3+\\
  +\frac{1}{216}\bigl(2^{n+3}+(-1)^n(19-48n+18n^2)-27\bigr)\qrep{P}_1
  +\frac{1}{216}\bigl(2^{n+4}+(-1)^n(11+12n-18n^2)-27\bigr)\qrep{P}_2,\notag\\
  \qrep{X}_2^{\dtensor n}\dtensor\qrep{X}_3=\frac{2}{3}\bigl(2^{n-1}+(-1)^n\bigr)
   \qrep{X}_3+\frac{1}{9}\bigl(2^{n}+(-1)^n(3n-1)\bigr)\qrep{P}_1
  +\frac{1}{9}\bigl(2^{n+1}-(-1)^n(3n+2)\bigr)\qrep{P}_2,\notag\\
\qrep{X}_2^{\dtensor n}\dtensor\qrep{X}_3^{\dtensor 2}=2^n\qrep{X}_3
  +\frac{2}{3}(2^{n-1}+(-1)^n)\qrep{P}_1
          +\frac{2}{3}(2^{n}-(-1)^n)\qrep{P}_2,\qquad n\geq0,\\
\label{c3}\qrep{X}_2^{\dtensor n}\dtensor\qrep{X}_3^{\dtensor m}=2^n3^{m-2}\qrep{X}_3+2^n3^{m-3}\qrep{P}_1
          +2^{n+1}3^{m-3}\qrep{P}_2,\qquad m\geq3
\end{gather}
In the previous formulas we wrote dimensions of multiplicity spaces instead of themselves.
\end{rem}
The spaces of multiplicities $\repV_s[\vect{n}]$ and $\repX_s[\vect{n}]$ are $s\ell(2)$ modules;
$\repV_s[\vect{n}]$ is trivial module (sum of 1 dimensional modules).
The $s\ell(2)$ action in the multiplicity spaces is related with the Lusztig extension
of the quantum group $\UresSL{2}$. There exists the quantum group $\LUresSL{2}$ such that 
$\UresSL{2}$ is its subgroup and the quotient is the universal enveloping of $s\ell(2)$.
An irreducible representation of $\UresSL{2}$ is the irreducible representation of 
$\LUresSL{2}$ with the trivial $s\ell(2)$ action. Therefore  $s\ell(2)$ acts in the
multiplicity spaces.
The multiplicity spaces are graded by Cartan generator $h$ of $s\ell(2)$.
For example for \eqref{c3}, we have
\begin{multline}
 \qrep{X}_2^{\dtensor n}\dtensor\qrep{X}_3^{\dtensor m}=(z+z^{-1})^n(z^2+1+z^{-2})^{m-2}\qrep{X}_3
   +(z+z^{-1})^n(z^2+1+z^{-2})^{m-3}\qrep{P}_1+\\
          +(z+z^{-1})^{n+1}(z^2+1+z^{-2})^{m-3}\qrep{P}_2,\qquad m\geq3.
\end{multline}
In the next section we investigate the additional grading given by $D$ \eqref{theD}
in the spaces $\repV_s[\vect{n}]$ and $\repX_s[\vect{n}]$ and obtain the formulas for characters.

\section{Characters of induced modules\label{sec:induced}}
\subsection{Induced modules of $\algA(p)$\label{sec:ind-mod}}
We fix a $p+1$ dimensional vector $\vect{u}$.
Components of $\vect{u}$ are labeled by
the set of indices $\setI=\{+,-,1,2,\dots,p-1\}$.
For a vector $\vect{u}$, we define
the subalgebra $\algA(p)[\vect{u}]^+\subset\algA(p)$ generated by the modes
\begin{equation}\label{hw-cond}
  a^\pm_{\vect{w}_\pm+m},\qquad H^{2n}_{\vect{w}_n+m},\quad1\leq n\leq p-1,\quad m\in\oN,
\end{equation}
where $\vect{w}=\vect{u}-\vect{\Delta}$ and $\vect{\Delta}$ is given by~\eqref{vectDelt}.

We define the $\algA(p)$ module $\rep{M}_{\vect{u}}$ induced from
trivial 1-dimensional $\algA(p)[\vect{u}]^+$ module with the
highest-weight vector $\ket{\vect{u}}$.  The irreducible $\algA(p)$
modules $\qrep{X}_{s}$ are induced modules $\rep{M}_{\vect{u}_s}$ with
\begin{equation}\label{vac-u}
  \vect{u}_s=(\frac{s-1}{2},\frac{s-1}{2},\underbrace{1,2,\dots s-1}_{s-1},\underbrace{s-1,\dots,s-1}_{p-s}).
\end{equation}
This statement follows from \cite{FFT}.

The $\algA(p)$ Verma modules $\rep{Y}^\pm_s$ are also induced modules.
\begin{Prop}\label{prop:verma-ind}
 \begin{equation}
 \rep{Y}^\pm_s\simeq\rep{M}_{\vect{u}_s^\pm}
\end{equation}
with
\begin{gather}
  \vect{u}_s^+=(\frac{s-1}{2},\frac{s-1}{2}+p-s,\underbrace{s-1,\dots,s-1}_{p-1}),\\
  \vect{u}_s^-=(\frac{s-1}{2}+p-s,\frac{s-1}{2},\underbrace{s-1,\dots,s-1}_{p-1}).
\end{gather}
\end{Prop}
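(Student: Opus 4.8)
The module $\rep{Y}^+_s$ is, by definition, the lattice module $\rep{Y}_s$ turned into an $\algA(p)$-module through the embedding \eqref{defa}, and its cyclic vector $\ket{s}^+$ obeys the two annihilation conditions \eqref{hw-verma}. To identify it with $\rep{M}_{\vect{u}_s^+}$ I first check that $\ket{s}^+$ is a highest-weight vector for the subalgebra $\algA(p)[\vect{u}_s^+]^+$ defined in \eqref{hw-cond}. Writing $\vect{w}=\vect{u}_s^+-\vect{\Delta}$ with $\vect{\Delta}$ from \eqref{vectDelt}, the two doublet thresholds come out to
\[
 w_+=\tfrac{s-1}{2}-\tfrac{3p-2}{4}=-\tfrac{3p-2s}{4},\qquad
 w_-=\bigl(\tfrac{s-1}{2}+p-s\bigr)-\tfrac{3p-2}{4}=-\tfrac{3p-2s}{4}+(p-s),
\]
which are exactly the indices appearing in \eqref{hw-verma}; hence the generators $a^\pm_{w_\pm+m}$, $m\in\oN$, annihilate $\ket{s}^+$ by construction. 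The case of $\rep{Y}^-_s$ then follows by the substitution $a^+\leftrightarrow a^-$, which is precisely the difference between the embeddings \eqref{defa} and \eqref{defa-1} and which exchanges $\vect{u}_s^+$ with $\vect{u}_s^-$.

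It remains to treat the neutral generators $H^{2n}_{w_n+m}$ with $w_n=(s-1)-2n$. These are not independent data: the third line of \eqref{mainOPE} expresses every mode $H^{2n}_k$ as a normal-ordered bilinear $\sum a^+_j a^-_l$ in the doublet modes with $j+l=k$. The plan is to show that for $k\geq w_n+1$ each such monomial has its constituent factors among the modes that already annihilate $\ket{s}^+$, so that the doublet conditions just verified force $H^{2n}_{w_n+m}\ket{s}^+=0$. This reduces the entire highest-weight requirement for $\algA(p)[\vect{u}_s^+]^+$ to the two conditions \eqref{hw-verma}, and the universal property of induced modules then yields a surjective $\algA(p)$-homomorphism $\rep{M}_{\vect{u}_s^+}\twoheadrightarrow\rep{Y}^+_s$ sending $\ket{\vect{u}_s^+}\mapsto\ket{s}^+$.

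For injectivity I would compare graded characters with respect to the grading \eqref{theD}. On one side $\rep{Y}^+_s=\rep{Y}_s$ is a lattice/Fock module whose character is known explicitly from the free-field realization. On the other side $\rep{M}_{\vect{u}_s^+}$ is by construction freely generated from $\ket{\vect{u}_s^+}$ by the complementary creation modes; equivalently, $\rep{Y}^\pm_s$ are the $\algA(p)$ Verma modules, the universal highest-weight modules for the conditions \eqref{hw-verma}, which is exactly the defining property of the induced module. Matching the two characters forces the surjection above to be an isomorphism.

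The main obstacle is the bookkeeping for the neutral generators in the second step: one must verify that the thresholds $w_n=(s-1)-2n$ are exactly compatible with the operator product \eqref{mainOPE}, i.e. that the bilinears $H^{2n}_{\geq w_n+1}$ lie in the subalgebra generated by the annihilating doublet modes, and that no relations beyond \eqref{hw-verma} survive in $\rep{Y}^\pm_s$. Because \eqref{mainOPE} carries the fractional prefactor $(z-w)^{-\frac{3p-2}{2}}$, this mode count has to be carried out in the correct charge sector of $\rep{Y}_s$, where the half-integer shifts of the expansion \eqref{modes} are absorbed; this is where the Felder-complex description \eqref{felder-res} of $\rep{Y}_s$ and its free-field character enter the argument.
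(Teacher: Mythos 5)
Your first step is fine: the threshold bookkeeping $w_\pm=\vect{u}_s^\pm-\vect{\Delta}$ reproducing \eqref{hw-verma}, the (plausible, though only sketched) reduction of the $H^{2n}$ conditions to the doublet conditions via \eqref{mainOPE}, and the resulting surjection $\rep{M}_{\vect{u}_s^\pm}\twoheadrightarrow\rep{Y}^\pm_s$ from universality of induction. That surjection is also the final ingredient of the paper's own proof. The genuine gap is in your injectivity argument. You assert that $\rep{M}_{\vect{u}_s^+}$ is ``by construction freely generated from $\ket{\vect{u}_s^+}$ by the complementary creation modes''; this is false. The modes of $\algA(p)$ satisfy many relations beyond commutation relations --- the OPEs \eqref{mainOPE} (for instance $a^+(z)a^+(w)\sim(z-w)^{p/2}$) impose vanishing conditions on bilinears, and the $H^{2n}$ are themselves composites of the doublet --- so an induced module is very far from free, and its character is exactly the nontrivial quantity that must be computed. (If induced modules were free, their characters could not be the fermionic sums \eqref{eq:ferm-char-gen}, whose quadratic form $\half\vect{n}\theA\cdot\vect{n}$ in the exponent records precisely these relations.) Your fallback, that $\rep{Y}^\pm_s$ ``are the universal highest-weight modules for the conditions \eqref{hw-verma}'', is circular: universality gives only the surjection you already have, and the claim that this surjection has trivial kernel \emph{is} the Proposition. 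The name ``Verma module'' for $\rep{Y}^\pm_s$ refers to its image in the quantum-group category $\catA$, not to any a priori universal property as an $\algA(p)$-module.

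The paper closes this gap with the abelianization technique, which supplies the missing \emph{upper} bound on the character of the induced module. One filters $\algA(p)$ so that the associated graded algebra is the quadratic algebra $\bar\algA(p)$ determined by the matrix \eqref{eq:the-matrix}; the filtration induces one on the cyclic module $\rep{M}_{\vect{u}_{s,r}}$, whose associated graded module is a quotient of the $\bar\algA(p)$-module induced from the same highest-weight conditions. Hence the character of $\rep{M}_{\vect{u}_s^\pm}$ is bounded above by the fermionic formula \eqref{eq:ferm-char-gen}, and by the results of \cite{[FLT]} (proved there for even $p$ and extended to odd $p$) that fermionic formula equals the lattice/Fock character $\xi^\pm_s$ of $\rep{Y}^\pm_s$, Eq.~\eqref{char-y}. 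Combined with the lower bound coming from the surjection, the characters coincide and the surjection is an isomorphism. Some substitute for this upper bound is indispensable: the free-field character of $\rep{Y}_s$, which is the only character your sketch actually has in hand, sits on the wrong side of the comparison and by itself cannot rule out a kernel.
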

A proof of the Proposition is based on results \cite{[FLT]} and abelianization technique.
We give a sketch of the proof in Sec.~\ref{sec:abel}.

The modules $\rep{M}_{\vect{u}_{s,r}}$ induced from the subalgebra corresponding to
vectors
\begin{equation}
 \vect{u}_{s,r}=(\frac{s-rp-1}{2},\frac{(r+2)p-s-1}{2},\underbrace{s-1,\dots,s-1}_{p-1}),\qquad
  r\in\oZ,\quad 1\leq s\leq p
\end{equation}
are isomorphic to $\rep{Y}^+_s$ for $r\geq0$ and to $\rep{Y}^-_{p-s}$ for $r<0$.

\subsection{Decompositions of induced $\algA(p)$-modules\label{sec:ind-mod-decomp}}
We consider a set of modules induced from highest-weight conditions corresponding
to vectors of the form
\begin{equation}\label{tensor-set}
 \vect{u}=\sum_{j=2}^p n_j\vect{u}_j+\sum_{r\in\oZ}\sum_{s=1}^p n_s^r\vect{u}_{s,r},
\end{equation}
where $n_j$ and $n_s^r$ are nonnegative integers and only finite number of $n_s^r$ are not 
equal to~$0$.
\begin{Prop}
 For two vectors $\vect{u}$ and $\vect{u}'$ of the form \eqref{tensor-set} the fusion
of induced modules is induced module:
\begin{equation}
 \repM_{\vect{u}}\dtensor\repM_{\vect{u}'}=\repM_{\vect{u}+\vect{u}'}.
\end{equation}
\end{Prop}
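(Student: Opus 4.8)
The plan is to establish the isomorphism in two stages: first exhibit a canonical homomorphism $\repM_{\vect{u}+\vect{u}'}\to\repM_{\vect{u}}\dtensor\repM_{\vect{u}'}$ out of the product of the two cyclic vectors, and then show it is bijective by comparing characters with respect to the grading $D$ of \eqref{theD}, these characters being computed by abelianization exactly as in the sketch announced for Proposition~\ref{prop:verma-ind} (Sec.~\ref{sec:abel}). Throughout I work with the coinvariant/inductive-limit description of fusion recalled in the Introduction, so that $\repM_{\vect{u}}\dtensor\repM_{\vect{u}'}$ is reconstructed from the three-point coinvariants $\Coinv(\repM_{\vect{u}},\repM_{\vect{u}'},\repM_{\vect{v}})$ on $\mathbb{P}^1$ as the components of $\vect{v}$ tend to infinity.

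First I would identify the distinguished cyclic vector. Placing $\repM_{\vect{u}}$ and $\repM_{\vect{u}'}$ at two points of $\mathbb{P}^1$, the image of $\ket{\vect{u}}\otimes\ket{\vect{u}'}$ in the fusion is cyclic, and the claim is that it satisfies the highest-weight conditions \eqref{hw-cond} for $\vect{u}+\vect{u}'$. This is the one genuinely algebraic computation: one expands the currents $a^\pm(z)$ and $H^{2n}(z)$ near the fusion point and reads off the lowest mode that acts nontrivially from the OPEs \eqref{mainOPE}. The essential point is that the pole orders in \eqref{mainOPE} are governed by the conformal-dimension vector $\vect{\Delta}$ of \eqref{vectDelt} (for instance $a^+(z)a^-(w)\sim(z-w)^{-\frac{3p-2}{2}}$, with $\frac{3p-2}{2}=2\Delta_\pm$), so that when the two sets of conditions indexed by $\vect{w}=\vect{u}-\vect{\Delta}$ and $\vect{w}'=\vect{u}'-\vect{\Delta}$ are combined, the singular part of the OPE contributes exactly one shift by $\vect{\Delta}$ and the surviving lowest modes are those indexed by $\vect{w}+\vect{w}'+\vect{\Delta}=(\vect{u}+\vect{u}')-\vect{\Delta}$. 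This produces the homomorphism $\repM_{\vect{u}+\vect{u}'}\to\repM_{\vect{u}}\dtensor\repM_{\vect{u}'}$; since the fusion of two cyclic modules is again cyclic, generated by the product of the cyclic vectors, the map is surjective.

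It remains to prove injectivity, and here I would invoke abelianization. Degenerating $\algA(p)$ to its near-abelian form turns each induced module into a space with an explicit monomial basis in the modes complementary to \eqref{hw-cond}, and turns the three-point coinvariant into a combinatorial object whose Poincar\'e series in $D$ is a fermionic sum. The structural fact in the degenerate model is that the only relations are the quadratic ones coming from the leading OPE terms, and these depend on the weight data only through the additive quantities $\vect{w}+\vect{w}'$; hence the fermionic character of the inductive limit of $\Coinv(\repM_{\vect{u}},\repM_{\vect{u}'},\repM_{\vect{v}})^*$ stabilizes to that of $\repM_{\vect{u}+\vect{u}'}$. Matching this against the character of $\repM_{\vect{u}+\vect{u}'}$ computed the same way forces the surjection of the previous paragraph to be an isomorphism. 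The restriction to vectors of the form \eqref{tensor-set} enters precisely here: these are the weights for which the induced module is the corresponding fusion of the irreducibles $\qrep{X}_j$ and the Verma modules $\rep{Y}^\pm_s$ (Proposition~\ref{prop:verma-ind}), so the highest-weight conditions are ``dominant'' enough that no extra singular vectors appear and the abelianized count is exact.

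The main obstacle is this last step: controlling the residual non-abelian corrections. Because $\algA(p)$ is only \emph{close} to abelian after degeneration, one must check that the quadratic relations of the degenerate algebra account for \emph{all} relations imposed in the coinvariant, neither losing genuine relations nor gaining spurious ones, and that for each fixed $D$-degree the limit over $\vect{v}$ is already attained at a finite stage. Equivalently, one must show that the filtration defining the abelianization is strict on both $\repM_{\vect{u}}\dtensor\repM_{\vect{u}'}$ and $\repM_{\vect{u}+\vect{u}'}$, so that the associated graded characters coincide with the honest ones. It is this strictness, rather than the weight bookkeeping of the second paragraph, that is the crux of the argument.
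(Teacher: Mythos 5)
There is no proof in the paper for you to be measured against: this Proposition is one of the two statements the authors themselves declare open. It is announced in the Introduction as a conjecture (\eqref{the-statement}), and the Conclusions state explicitly that ``at the moment we do not know a proof of these statements,'' the intended route being to reduce everything to the character identity \eqref{main-ident} and then argue ``in a more or less standard way'' as in \cite{[FLT]}. Your overall plan --- construct a surjection $\repM_{\vect{u}+\vect{u}'}\to\repM_{\vect{u}}\dtensor\repM_{\vect{u}'}$ from the universal property of the induced module, then force bijectivity by a character count obtained from abelianization --- is exactly the route the authors envision, so your proposal should be read as a reconstruction of their program rather than an alternative to an existing argument.

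That said, your write-up contains two genuine gaps, and they are precisely the points the paper leaves open. First, surjectivity: you assert that the fusion of two cyclic modules is cyclic on the image of $\ket{\vect{u}}\otimes\ket{\vect{u}'}$. For the coinvariant/inductive-limit definition of $\dtensor$ used here this is not automatic; it is essentially of the same strength as the statement being proved (compare \eqref{ind-tensor}, which the paper notes is a theorem only for certain minimal models and a conjecture otherwise). Second, injectivity: you defer to ``strictness of the abelianization filtration,'' i.e.\ to the claim that the fermionic count \eqref{eq:ferm-char-gen} of the degenerate algebra equals the true character both of the fusion and of $\repM_{\vect{u}+\vect{u}'}$ for all $\vect{u}$ of the form \eqref{tensor-set}. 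That equality is precisely the content of the unproven identity \eqref{main-ident} together with the unproven Proposition asserting that \eqref{eq:ferm-char-gen} computes characters of these induced modules; the filtration argument actually sketched in the paper (proof of Prop.~\ref{prop:verma-ind} in Sec.~\ref{sec:abel}) yields only a one-sided character inequality plus a surjection, and is carried out only for the Verma-type vectors $\vect{u}_{s,r}$, where the fermionic answer can be checked against \cite{[FLT]}. So your proposal correctly locates the crux but does not close it: as it stands it is a reduction of the Proposition to the paper's central open character identity, not a proof.
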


For vectors
\begin{equation}\label{u-vector}
 \vect{u}=n_2\vect{u}_2+n_3\vect{u}_3+\dots+n_p\vect{u}_p,
\end{equation}
where $n_2,n_3,\dots,n_p$ are nonnegative integers and vectors $\vect{u}_s$ are given by \eqref{vac-u}
the induced module $\rep{M}_{\vect{u}}$ is tilting and therefore decomposes into a direct sum
of projective and irreducible modules
  \begin{equation}\label{ind-mod-decomp-A}
  \rep{M}_{\vect{u}}
   =
\bigoplus_{s=1}^{p-1}\repV_s[\vect{n}]\boxtimes\qrep{X}_{s}
 \bigoplus\,
  \bigoplus_{s=1}^p\repX_s[\vect{n}]\boxtimes\qrep{P}_{s}.
  \end{equation}
  The right hand side is the same as in~\eqref{ind-mod-decomp} 
and $\vect{n}=(n_2,n_3,\dots,n_p)$.
We note that whenever $n_p>0$ there is no modules $\qrep{X}_{s}$ in the
decomposition, thus $\bar{N}_s[\vect{n}]=0$. 

\subsection{Characters of induced modules}
The character of a module $\repM$ is defined by
\begin{equation}
 \bar\chi_{\repM}(q)={\rm Tr}_{\repM}\,q^{L_0-\frac{c}{24}},
\end{equation}
where $L_0$ is the zero mode of $T(z)$ \eqref{the-emt} and $c$ is given 
by~\eqref{c-charge}.
The characters of irreducible $\algA(p)$ modules $\qrep{X}_{s}$ are
\begin{equation}
  \bar\chi_{s}(q)=\mfrac{1}{\eta(q)}
      \Bigl(\ffrac{s}{p}\,\bigl(\theta_{p{-}s,p}(q)+\theta_{s,p}(q)\bigr)
      + 2\,\bigl(\theta'_{p{-}s,p}(q)-\theta'_{s,p}(q)\bigr)\Bigr)
\end{equation}
where the eta function is
\begin{align*}
  \eta(q)&=q^{\frac{1}{24}} \prod_{n=1}^{\infty} (1-q^n)
  \\
  \intertext{and the theta functions} 
  \theta_{s,p}(q,z)&=\sum_{j\in\oZ + \frac{s}{2p}} q^{p j^2} z^j,
  \quad |q|<1,~z\in\oC\,,
\end{align*}
and we set $\theta_{s,p}(q)\,{=}\,\theta_{s,p}(q,1)$ and
$\theta'_{s,p}(q)\,{=}\,z\frac{\dd}{\dd
  z}\theta_{s,p}(q,z)\!\!\bigm|_{z=1}$.
As a $q$ series the same characters are
\begin{equation}
 \bar\chi_{s}(q)=\frac{q^{-\frac{1}{24}}}{\prod_{n=1}^{\infty} (1-q^n)}
   \sum_{n\in\oZ}nq^{\frac{p}{4}(n-\frac{s}{p})^2}.\label{chi}
\end{equation}
In what follows we work with normalized characters $\chi_{\repM}=q^{-\Delta+\frac{c}{24}}\bar\chi_{\repM}$,
where $\Delta$ is the conformal dimension of the highest-weight vector in $\repM$.
We also insert in characters the dependence on additional variable $z$ in the following way
\begin{equation}
 \chi_{\repM}(q)=q^{-\Delta}{\rm Tr}_{\repM}\,q^{L_0}z^h,
\end{equation}
where $h$ is the Cartan generator of $s\ell(2)$.
Then the normalized character of $\qrep{X}_s$ is
\begin{equation}
 \chi_{s}(q,z)=\frac{q^{-\frac{(p-s)^2}{4p}}}{\prod_{n=1}^{\infty} (1-q^n)}
   \sum_{n\in\oN}\sum_{j=-\frac{n}{2}}^{\frac{n}{2}}z^{2j}\Bigl(
         q^{\frac{p}{4}(n-\frac{s}{p})^2}-q^{\frac{p}{4}(n+\frac{s}{p})^2}\Bigr).\label{chi-norm}
\end{equation}
This formula for the character immediately follows from \eqref{decomp}.
The characters of $\algW(p)$ irreducible modules corresponds to even and odd
powers of $z$ in~\eqref{chi-norm}
\begin{gather}
 \chi_{s}^+(q,z)=\frac{q^{-\frac{(p-s)^2}{4p}}}{\prod_{n=1}^{\infty} (1-q^n)}
   \sum_{n\in\oNe}\sum_{j=-\frac{n}{2}}^{\frac{n}{2}}z^{2j}\Bigl(
         q^{\frac{p}{4}(n-\frac{s}{p})^2}-q^{\frac{p}{4}(n+\frac{s}{p})^2}\Bigr),\label{chi-W-norm-p}\\
\chi_{s}^-(q,z)=\frac{q^{-\frac{(p-s)^2}{4p}}}{\prod_{n=1}^{\infty} (1-q^n)}
   \sum_{n\in\oNo}\sum_{j=-\frac{n}{2}}^{\frac{n}{2}}z^{2j}\Bigl(
         q^{\frac{p}{4}(n-\frac{s}{p})^2}-q^{\frac{p}{4}(n+\frac{s}{p})^2}\Bigr),\label{chi-W-norm-m}
\end{gather}
where $\oNe$ and $\oNo$ are even and odd positive integers respectively.

\subsection{Abelianization\label{sec:abel}}
The Gordon-type matrix
\begin{equation}\label{eq:the-matrix}
 \theA=
\begin{pmatrix}
     \frac{p}{2} &\frac{p}{2} &  1  &  2  &  3  &\dots&  p-1\\
     \frac{p}{2} &\frac{p}{2} &  1  &  2  &  3  &\dots&  p-1\\
       1  &  1  &  2  &  2  &  2  &\dots&   2 \\
       2  &  2  &  2  &  4  &  4  &\dots&   4 \\
       3  &  3  &  2  &  4  &  6  &\dots&   6 \\
     \dots&\dots&\dots&\dots&\dots&\dots&\dots\\
      p-1 & p-1 &  2  &  4  &  6  &\dots& 2(p-1)
  \end{pmatrix}
\end{equation}
determines an algebra $\bar\algA(p)$ with quadratic relations that admits a realization
in terms of vertex operators 
\begin{equation}
 \bar{a}^+,\bar{a}^-,\bar{H}^2,\dots,\bar{H}^{2p-2}
\end{equation}
with momenta 
\begin{equation}\label{vectors}
 \bar{v}_+,\bar{v}_-,\bar{v}_1,\dots,\bar{v}^{p-1}
\end{equation}
with the scalar products
\begin{equation}\label{scalar-prod}
 (\bar{v}_i,\bar{v}_j)=\theA_{ij}.
\end{equation}
We note that the matrix $\theA$ is degenerate therefore for realization of $\bar\algA(p)$
by vertex operators we should take the space with a dimension grater than $p+1$ with
nondegenerate scalar product and construct in it the $p+1$ linarly independent vectors~\eqref{vectors}
with the scalar product~\eqref{scalar-prod}.
The algebra $\bar\algA(p)$ is related to $\algA(p)$ in the following way.
The algebra $\algA(p)$ admits such a multifiltration that the adjoint grading algebra is 
isomorphic to~$\bar\algA(p)$. In particular it means that  $\algA(p)$ can be considered 
a deformation of $\bar\algA(p)$, i.e.{} there exists such a family of algebras $\bar\algA_{\hbar}(p)$
that $\bar\algA_{0}(p)\simeq\bar\algA(p)$ and $\bar\algA_{\hbar}(p)\simeq\algA(p)$ whenever~$\hbar\neq0$.

Induced modules of $\bar\algA(p)$ are described by the same vectors $\vect{u}$ and highest-weight conditions
like $\algA(p)$ ones \eqref{hw-cond}. The algebra $\bar\algA(p)$ admits a natural bigrading by 
operators $\bar L_0$ and $h$ and the normalized character of induced module $\repM$ is
\begin{equation}
 \chi_{\repM}(q)={\rm Tr}_{\repM}\,q^{\bar L_0}z^h.
\end{equation}
Then,
the normalized character of the module induced from the subalgebra
described by the vector $\vect{u}$ is
  \begin{equation}\label{eq:ferm-char-gen}
  \chi_{\vect{v}}(q,z)=
   \sum_{n_+,n_-,n_1,\dots,n_{p-1}\ge 0}z^{n_+-n_-}
  \frac{q^{\half\vect{n}\theA\cdot\vect{n} + \vect{v}\cdot\vect{n}}}
       {(q)_{n_+}(q)_{n_-}(q)_{n_1}\dots (q)_{n_{p-1}}},
\end{equation}
where $\theA$ is given by~\eqref{eq:the-matrix} and the vector 
\begin{equation}
\vect{v}=-\vect{u}+\vect{v}_1
\end{equation}
with 
\begin{equation}
\vect{v}_1=(\frac{p-1}{2},\frac{p-1}{2},1,2,\dots,p-1) 
\end{equation}
and $\cdot$ is the standard scalar product. We note that $z$ grading of $\bar a^\pm_n$ is $\pm1$
and $z$ grading of $\bar H^i_n$ is zero.
In \cite{FFT} it was shown that the characters of $\bar\algA_{\hbar}(p)$-modules 
induced from $\vect{u}_s$ \eqref{vac-u} are independent of~$\hbar$ and coincide
with characters of $\algA(p)$ irreducible modules \eqref{chi-norm}.
For irreducible characters, we have
$\chi_s(q,z)=\chi_{\vect{v}_s}(q,z)$ with
\begin{equation}\label{eq:v-vect-irr}
  \vect{v}_s=(\frac{p-s}{2},\frac{p-s}{2},\underbrace{0,\dots0}_{s-1},
 \underbrace{1,2,\dots,p-s}_{p-s}).
\end{equation}

\begin{Prop}
For the vectors $\vect{u}$ of the form~\eqref{tensor-set}, the 
formula~\eqref{eq:ferm-char-gen} gives the character of the $\algA(p)$-module
induced from subalgebra described by~$\vect{u}$. 
\end{Prop}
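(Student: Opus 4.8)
The plan is to compare the character of the $\algA(p)$-module $\repM_{\vect{u}}$ with the character of the $\bar\algA(p)$-module induced from the same vector $\vect{u}$, and to show that the deformation $\bar\algA_{\hbar}(p)$ does not alter this character when $\vect{u}$ has the form \eqref{tensor-set}. First I would record that, since $\bar\algA(p)\simeq\mathrm{gr}\,\algA(p)$, the induced $\bar\algA(p)$-module carries a monomial (PBW-type) basis whose exponents are constrained exactly by the quadratic relations encoded in the Gordon matrix $\theA$; summing the $\bar L_0$- and $h$-weights of these monomials reproduces verbatim the right-hand side of \eqref{eq:ferm-char-gen}. Because $\algA(p)$ (the value $\hbar\neq0$) is a filtered deformation of $\bar\algA(p)$ (the value $\hbar=0$), there is a surjection from the abelianized induced module onto $\mathrm{gr}\,\repM_{\vect{u}}$, so \eqref{eq:ferm-char-gen} is a coefficientwise upper bound for $\chi_{\repM_{\vect{u}}}$ for every $\vect{u}$. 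What remains is the matching lower bound, i.e. that no monomial is annihilated once $\hbar$ is switched on.

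Next I would dispose of the two families of generators of the semigroup \eqref{tensor-set}. For $\vect{u}=\vect{u}_s$ the module is the irreducible $\qrep{X}_s$, and $\hbar$-independence of its character is precisely the statement of \cite{FFT}, with \eqref{eq:ferm-char-gen} specializing to \eqref{chi-norm} through \eqref{eq:v-vect-irr}. For $\vect{u}=\vect{u}_{s,r}$ the module is, by Proposition~\ref{prop:verma-ind} (which rests on \cite{[FLT]}), one of the Verma modules $\rep{Y}^\pm_s$; these are modules over the lattice VOA $\algL(p)$, so their characters are manifest free-field products, visibly independent of $\hbar$, and again match \eqref{eq:ferm-char-gen}. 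Thus the Proposition holds on the generators.

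The main step is to propagate $\hbar$-independence to an arbitrary $\vect{u}$ of the form \eqref{tensor-set}. Here I would use the fusion identity $\repM_{\vect{u}}\dtensor\repM_{\vect{u}'}=\repM_{\vect{u}+\vect{u}'}$ of the preceding Proposition, which exhibits every such $\repM_{\vect{u}}$ as a fusion of the building blocks above. The grading $D$ of \eqref{theD} is additive under fusion (up to the constant $L_0$-shift absorbed in the normalization $q^{-\Delta}$), while the fusion functor on $\catA$ — equivalently the tensor product of $\UresSL{2}$-modules — is insensitive to $\hbar$. Consequently the graded object $\repM_{\vect{u}}$, and hence its character, is $\hbar$-independent whenever the characters of the factors are. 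Since independence was established for the generators, the character of $\repM_{\vect{u}}$ is $\hbar$-independent for all $\vect{u}$ of the form \eqref{tensor-set}, so the $\algA(p)$-character ($\hbar\neq0$) coincides with the $\bar\algA(p)$-character ($\hbar=0$), which is \eqref{eq:ferm-char-gen}.

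The genuine obstacle is making this last $\hbar$-independence rigorous, because the fermionic sum \eqref{eq:ferm-char-gen} is \emph{not} multiplicative under fusion: a product of two Gordon sums is not again a single Gordon sum, so one cannot recover the formula for $\vect{u}+\vect{u}'$ by naively multiplying characters of the factors. The real content is the structural claim that the abelianized monomial basis of $\repM_{\vect{u}}$ stays linearly independent after turning on $\hbar$, and that this holds precisely for $\vect{u}$ of the form \eqref{tensor-set}; these are exactly the vectors producing tilting modules as in \eqref{ind-mod-decomp-A}, whose graded dimensions are rigid under the deformation because they are fixed by the deformation-invariant fusion of $\catA$. Making this rigidity precise — for instance by checking that the deformed monomials form a triangular, hence nondegenerate, family with respect to the defining filtration — is where the work concentrates.
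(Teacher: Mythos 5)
Your abelianization upper bound and your treatment of the two families of generators ($\vect{u}_s$ via \cite{FFT}, $\vect{u}_{s,r}$ via Prop.~\ref{prop:verma-ind} and \cite{[FLT]}) are sound, and they are exactly the ingredients the paper itself assembles. But the propagation step that carries the claim to a general $\vect{u}$ of the form \eqref{tensor-set} has a genuine gap, in fact two. First, it rests on the fusion identity $\repM_{\vect{u}}\dtensor\repM_{\vect{u}'}=\repM_{\vect{u}+\vect{u}'}$, which is precisely the unproven statement \eqref{the-statement}: the paper states it as a Proposition without proof, and the Conclusions concede openly that neither \eqref{the-statement} nor \eqref{main-ident} is proved, the intended logic being that one should \emph{check} the purely combinatorial identity \eqref{main-ident}, from which both the fusion statement and the present character statement would then follow ``in a more or less standard way'' along the lines of \cite{[FLT]}. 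So the paper in fact contains no proof of this Proposition, and your argument runs the implication backwards relative to the paper's plan: you use a sibling unproven conjecture as an input rather than reducing everything to \eqref{main-ident}.

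Second, even granting fusion, your key inference --- that the fusion functor on $\catA$ is insensitive to $\hbar$ and ``consequently'' the character of $\repM_{\vect{u}+\vect{u}'}$ is $\hbar$-independent --- is invalid. The quasitensor structure of $\catA$ determines only the multiplicities $\hat{N}_s[\vect{n}]$ and $\bar{N}_s[\vect{n}]$ in the decomposition \eqref{ind-mod-decomp-A}; it does not determine the grading \eqref{theD} on the multiplicity spaces, which is exactly the information the character \eqref{eq:ferm-char-gen} carries beyond those integers. The paper emphasizes this point itself in the Conclusions: the $L_0$-action on multiplicity spaces is an \emph{additional} datum on top of the quasitensor category, the very datum needed to reconstruct the chiral theory. (Moreover, at $\hbar=0$ the degenerate algebra $\bar\algA(p)$ is not of vertex-operator type and its module category carries no fusion compatible with $\catA$, so ``$\hbar$-independence of fusion'' is not even a well-formed statement.) Your closing paragraph honestly flags the difficulty, but the proposed repair --- rigidity of tilting modules ``because they are fixed by the deformation-invariant fusion of $\catA$'' --- appeals to the same invalid principle that categorical fusion pins down $q$-gradings. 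What would actually close the gap, per the paper, is the verification of \eqref{main-ident}: it rewrites the abelianization bound \eqref{eq:ferm-char-gen} as a nonnegative combination of characters of irreducibles and projectives, and, combined with \cite{[FLT]}-type arguments exhibiting those constituents inside $\repM_{\vect{u}}$, it supplies the matching lower bound that turns your upper bound into an equality.
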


The characters
$\psi_s(q,z)$ of projective modules $\qrep{P}_{s}$ are
\begin{equation}\label{proj-char}
  \psi_s(q,z)=2\chi_s(q,z)+q^{\frac{2s-p}{4}}(z+z^{-1})\chi_{p-s}(q,z).
\end{equation}

\begin{proof}[Proof of Prop.~\ref{prop:verma-ind}.]
The abelianisation technique was used in \cite{[FLT]} for lattice 
VOA~$\algL(p)$ with the same matrix~$\theA$ but for even~$p$.
 We note that the results of \cite{[FLT]} can be easily generalaized
for odd~$p$. In particular, for characters of $\rep{Y}^\pm_s$ there were
obtained the fermionic formula
\begin{equation}\label{char-y}
 \xi^\pm_s(q,z)=\chi_{\vect{v}^\pm_s}(q,z),\qquad\vect{v}^\pm_s=-\vect{u}^\pm_s+\vect{v}_1.
\end{equation}
In the abelianization technique, we use a filtration on the algebra $\algA(p)$.
The filtration determines a filtration on the cyclic module $\repM_{\vect{u}_{s,r}}$
(the highest-wight vector is choosen to be cyclic).
The adjoint graded module $\bar{\repM}_{\vect{u}_{s,r}}$ is a representation
of $\bar{\algA}(p)$. We consider the $\bar{\algA}(p)$-module $\tilde{\repM}$ induced 
from the same highest-weight conditions as $\repM_{\vect{u}_{s,r}}$. The character
of $\tilde{\repM}$ is given by a fermionic formula and the formula coincides with~\eqref{char-y}.
On the other hand the character of $\tilde{\repM}$ is greater or equal to the character
of~$\repM_{\vect{u}_{s,r}}$. This means that the character of induced $\algA(p)$-module
$\repM_{\vect{u}_{s,r}}$ coincides with the character of the corresponding $\rep{Y}^\pm_s$.
The fact that the induced module $\repM_{\vect{u}_{s,r}}$ surjectively maps onto $\rep{Y}^\pm_s$
completes the proof.
\end{proof}

\section{Characters of multiplicity spaces\label{sec:char-kostka}}
For a vector $\vect{n}=(n_2,n_3,\dots,n_p)$ with nonnegative integer components,
we introduce the vector
\begin{equation}\label{m-vector}
\vect{m}=(0,0,n_2,n_3,\dots,n_p). 
\end{equation}
Then the vector $\vect{u}$ from~\eqref{u-vector} can be written as
\begin{equation}
 \vect{u}=\half\vect{m}\theA.
\end{equation}
We introduce polynomials $\hat{K}^{(p)}_{\ell,\vect{n}}(q,z)$
and $\bar{K}^{(p)}_{\ell,\vect{n}}(q)$, which are related to the characters of $\repX_\ell[\vect{n}]$
and $\repV_\ell[\vect{n}]$ respectively.
These polynomials are written in terms of $q$-binomial coefficients
\begin{equation}
 \qbin{n}{m}{q}=\frac{\prod_{j=1}^n(1-q^j)}{\prod_{j=1}^m(1-q^j)\prod_{j=1}^{n-m}(1-q^j)}
\end{equation}
for which we assume that $\qbin{n}{m}{q}=0$ whenever $n$ or $m$ is 
fractional or negative integer and whenever $m>n$.  

For a vector $\vect{n}=(n_2,n_3,\dots,n_p)$ with nonnegative integer
components, we define polinomials
\begin{equation}\label{K-proj}
  \hat{K}^{(p)}_{\ell,\vect{n}}(q,z)=
   \sum_{\vect{s}\in\oZ^{p+1}}
  z^{s_+-s_-}\,
  q^{\half\vect{s}\theA\cdot\vect{s} + \vect{v}_\ell\cdot\vect{s}}
 \prod\limits_{a\in\setI}
 \qbin{\vect{e}_a\cdot((\half\vect{m}-\vect{s})\theA-\vect{v}_\ell-\vect{v}_1+\vect{s})}
                   {\vect{e}_a\cdot\vect{s}}{q},
\end{equation}
where the vector $\vect{m}$ is given by~\eqref{m-vector}, $\vect{e}_a$ are the standard basis vectors
and indices of each vector belong to the set $\setI$.
We also define a version of Kostka polynomials
\begin{equation}\label{K-irr}
  \bar{K}^{(p)}_{\ell,\vect{n}}(q)=\left\{
    \begin{aligned}
      q^{\frac{\ell-|\vect{n}|-1}{2}}K^{(p-2)}_{\ell-1,(n_2,n_3,\dots,n_{p-1})}(q),
         \qquad\mbox{\rm for $n_p=0$},\\
      0,\qquad\mbox{\rm for $n_p>0$},
    \end{aligned}
\right.
\end{equation}
where standard level-restricted Kostka polynomials
$K^{(k)}_{\ell,\vect{u}}(q)$ are given by the formula
\begin{equation}
  K^{(k)}_{\ell,\vect{u}}(q)=\sum_{\substack{\vect{s}\in\oZ^k_{\geq0}\\
                  2|\vect{s}|=|\vect{u}|-\ell}}
   q^{\vect{s}\bar{\theA}\cdot\vect{s} + \vect{v}\cdot\vect{s}}
 \prod\limits_{1\leq a\leq k}
 \qbin{\vect{e}_a\cdot((\vect{u}-2\vect{s})\bar{\theA}-\vect{v}+\vect{s})}
                   {\vect{e}_a\cdot\vect{s}}{q},
\end{equation}
where $\bar{\theA}_{ij}=\mathrm{min}(i,j)$, the vector $\vect{v}$ with 
components $v_i=\mathrm{max}(i-k+\ell,0)$ for $i=1,2,\dots,k$
and $|\vect{u}|=\sum_{i=1}^k iu_i$.
\begin{Prop}
The characters of the multiplicity spaces $\repX_\ell[\vect{n}]$
and $\repV_\ell[\vect{n}]$ (see \eqref{ind-mod-decomp}) are given by
$\hat{K}^{(p)}_{\ell,\vect{n}}(q^{-1},z)$
and $\bar{K}^{(p)}_{\ell,\vect{n}}(q^{-1})$ respectively.
\end{Prop}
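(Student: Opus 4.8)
The plan is to pass to bigraded characters in the tilting decomposition \eqref{ind-mod-decomp-A} and then to prove the resulting $q$-series identity by resumming the fermionic formula \eqref{eq:ferm-char-gen}. First I take $\mathrm{Tr}\,q^{L_0}z^h$ of \eqref{ind-mod-decomp-A}. The operator $D$ of \eqref{theD} commutes with $\algA(p)$, hence preserves each isotypic summand; on $\repV_\ell[\vect{n}]\boxtimes\qrep{X}_\ell$ and $\repX_\ell[\vect{n}]\boxtimes\qrep{P}_\ell$ the operator $L_0$ splits as $D$ on the multiplicity factor plus the internal $L_0$ of the simple/projective factor, and $h$ splits as the Cartan action on the two factors. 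Since $\repV_\ell[\vect{n}]$ carries the trivial $s\ell(2)$-action its contribution is $z$-free, whereas $\repX_\ell[\vect{n}]$ contributes genuine $z$-dependence. Writing $\vect{u}=\half\vect{m}\theA$ (so that $\vect{v}=-\half\vect{m}\theA+\vect{v}_1$), the character of $\rep{M}_{\vect{u}}$ — which is the fermionic formula \eqref{eq:ferm-char-gen} by the preceding Proposition — factorizes summand by summand into
\begin{equation}
  \chi_{\vect{v}}(q,z)=\sum_{\ell=1}^{p-1}\mathrm{ch}\,\repV_\ell[\vect{n}]\cdot\chi_\ell(q,z)
  +\sum_{\ell=1}^{p}\mathrm{ch}\,\repX_\ell[\vect{n}]\cdot\psi_\ell(q,z),
\end{equation}
with $\chi_\ell$, $\psi_\ell$ as in \eqref{chi-norm}, \eqref{proj-char}.

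Since $\rep{M}_{\vect{u}}$ is tilting, its decomposition into indecomposables is unique (Krull--Schmidt), so the graded multiplicity characters $\mathrm{ch}\,\repV_\ell[\vect{n}]$ and $\mathrm{ch}\,\repX_\ell[\vect{n}]$ are well-defined invariants; the whole content of the Proposition is their explicit evaluation. I would therefore compute them by re-expressing the fermionic formula as a manifest sum of simple characters $\chi_\ell$ (with $z$-free coefficients) and indecomposable projective characters $\psi_\ell$ (with $s\ell(2)$-symmetric coefficients); the simple characters $\chi_1,\dots,\chi_p$ being linearly independent as $q$-series over $\oC[z,z^{-1}]$, such a presentation is unique and its coefficients are exactly the sought multiplicities. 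The claim to verify is that they equal $\bar{K}^{(p)}_{\ell,\vect{n}}(q^{-1})$ and $\hat{K}^{(p)}_{\ell,\vect{n}}(q^{-1},z)$. The inversion $q\mapsto q^{-1}$ is forced by the grading $D=L_0-d$, which runs opposite to the internal $L_0$; accordingly the exponent $\half\vect{s}\theA\cdot\vect{s}+\vect{v}_\ell\cdot\vect{s}$ in \eqref{K-proj}, evaluated at $q^{-1}$, produces the nonpositive $q$-powers visible in the $p=3$ examples of the Introduction.

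The heart is the resummation. For each index $a\in\setI$ I would perform a Durfee-rectangle splitting $N_a=s_a+n'_a$ of the summation variable in \eqref{eq:ferm-char-gen}, arranged so that summing over the ``bulk'' variables $n'_a$ rebuilds the fermionic character $\chi_{\vect{v}_\ell}=\chi_\ell$ of a simple factor, while the ``boundary'' variables $s_a$ generate precisely the $q$-binomials $\qbin{\cdots}{\vect{e}_a\cdot\vect{s}}{q}$ of \eqref{K-proj}. Collected over all $a$, the boundary data assemble the polynomial $\hat{K}^{(p)}_{\ell,\vect{n}}$; the contributions with trivial $s\ell(2)$-action (those with $1\le\ell\le p-1$ and $n_p=0$) collapse onto the level-$(p-2)$-restricted Kostka polynomials $K^{(p-2)}$ of \eqref{K-irr}, the prefactor $q^{\frac{\ell-|\vect{n}|-1}{2}}$ recording the ground-state energy shift. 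The only elementary identity needed is the Pascal-type recursion $\qbin{n}{m}{q}=\qbin{n-1}{m}{q}+q^{n-m}\qbin{n-1}{m-1}{q}$. As a cross-check one can instead propagate the recursion $\rep{M}_{\vect{u}+\vect{u}_s}=\rep{M}_{\vect{u}}\dtensor\qrep{X}_s$ using the stated products $\qrep{X}_r\dtensor\qrep{X}_s$ and $\qrep{X}_r\dtensor\qrep{P}_s$, matching it against the same recursion for \eqref{K-proj}, \eqref{K-irr}.

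I expect the main obstacle to be the bookkeeping in the projective sector. The resummation must assemble, for each $\ell$, the \emph{specific} combination $\psi_\ell=2\chi_\ell+q^{\frac{2\ell-p}{4}}(z+z^{-1})\chi_{p-\ell}$ of \eqref{proj-char} rather than a loose mix of simple characters; ensuring this — and thereby separating the simple multiplicities $\bar{K}$ from the projective multiplicities $\hat{K}$ — is where the $z\mapsto z^{-1}$ symmetry and the mixing of $\chi_\ell$ with $\chi_{p-\ell}$ must be handled with care. Because the sum \eqref{K-proj} runs over all of $\oZ^{p+1}$, one must also track which $q$-binomials vanish, how the Steinberg identification $\qrep{P}_p\simeq\qrep{X}_p$ (with $n_p>0$ forcing $\bar{N}_\ell=0$) enters at the boundary of the summation range, and how the truncation encoded by $\bar{\theA}_{ij}=\min(i,j)$ and $v_i=\max(i-k+\ell,0)$ in \eqref{K-irr} emerges from the full Gordon matrix $\theA$. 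Verifying that the reduction of \eqref{K-irr} to $K^{(p-2)}$ reproduces exactly the trivial-$s\ell(2)$ part of the $s\ell(2)\oplus\mathrm{Vir}$ decomposition \eqref{decomp} of $\qrep{X}_\ell$ is, I anticipate, the step demanding the most care.
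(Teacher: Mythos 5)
Your framework is fine as far as it goes: taking $\mathrm{Tr}\,q^{L_0}z^h$ of the tilting decomposition \eqref{ind-mod-decomp-A}, invoking Krull--Schmidt to make the multiplicity characters well defined, and explaining the $q\mapsto q^{-1}$ by the opposition of the grading $D=L_0-d$ to the internal $L_0$ all agree with the paper. The genuine gap is that the entire mathematical content of the Proposition lies in the step you defer: the resummation of \eqref{eq:ferm-char-gen} into $\sum_{\ell}\bar{K}^{(p)}_{\ell,\vect{n}}(q^{-1})\chi_\ell+\sum_{\ell}\hat{K}^{(p)}_{\ell,\vect{n}}(q^{-1},z)\psi_\ell$. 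Every sentence of your ``heart'' is conditional (``I would perform'', ``I expect'', ``I anticipate''); no Durfee-type splitting is actually exhibited, and none of the bookkeeping problems you correctly list (assembling the specific combinations $\psi_\ell$ of \eqref{proj-char}, the Steinberg boundary, the emergence of the restricted matrix $\bar{\theA}$) is resolved. This is not a routine verification that can be waved through: the identity you propose to check \emph{is} \eqref{main-ident}, which the authors single out in the Conclusions as one of the two crucial statements for which they themselves do not know a proof, everything else in the paper being reducible to it. Your cross-check via the recursion $\rep{M}_{\vect{u}+\vect{u}_s}=\rep{M}_{\vect{u}}\dtensor\qrep{X}_s$ does not help either, since that recursion rests on the equally unproven fusion statement \eqref{the-statement}. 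So the proposal reduces the Proposition exactly to the open problem and stops there.

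Two further points of comparison. First, the paper's own justification runs along a different route that you do not use: it identifies $\repX_s[\vect{n}]$ with the coinvariants $\qrep{X}_s/\algA(p)[\vect{u}]^-\qrep{X}_s$ (Prop.~\ref{prop:mult-coinv}), and then obtains \eqref{K-proj} by abelianization --- replacing $\qrep{X}_s$ by the module $\bar{\qrep{X}}_s$ over the degenerate algebra $\bar{\algA}(p)$ of Sec.~\ref{sec:abel} and computing the coinvariants of $\bar{\algA}(p)[\vect{u}]^-$ there, the sign mismatch of gradings producing the $q^{-1}$; no resummation of \eqref{eq:ferm-char-gen} is attempted. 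Second, your uniqueness argument is stated too quickly: linear independence of $\chi_1,\dots,\chi_p$ over $\oC[z,z^{-1}]$ is not enough, because $\psi_\ell=2\chi_\ell+q^{\frac{2\ell-p}{4}}(z+z^{-1})\chi_{p-\ell}$ makes the family $\{\chi_\ell\}\cup\{\psi_\ell\}$ linearly dependent over $\oC[z,z^{-1}]$; to separate the $z$-free coefficients $\bar{K}$ from the $z$-symmetric coefficients $\hat{K}$ one needs the paired equations for $s$ and $p-s$ together with a top-degree-in-$z$ argument (this does close, but it is an argument, not a citation of linear independence).
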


The character of the induced module $\repM_{\vect{u}}$
is given by \eqref{eq:ferm-char-gen}
with $\vect{v}=-\vect{u}+\vect{v}_1$.
The induced module is decomposed into a direct sum of irreducible
and projective modules~\eqref{ind-mod-decomp-A}, which gives an identity for characters.
\begin{Prop} For given vector $\vect{n}=(n_2,n_3,\dots,n_p)$ with nonnegative integer components, 
there is the identity
  \begin{equation}\label{main-ident}
   \chi_{-\half\vect{m}\theA+\vect{v}_1}(q,z)=
  \sum_{s=1}^{p-1}\bar{K}^{(p)}_{s,\vect{n}}(q^{-1})\chi_s(q,z)
  + \sum_{s=1}^p
         \hat{K}^{(p)}_{s,\vect{n}}(q^{-1},z) \psi_s(q,z),
  \end{equation}
  where $\chi_{\vect{v}}(q,z)$ is given by \eqref{eq:ferm-char-gen}, 
  $\chi_{s}(q,z)=\chi_{\vect{v}_s}(q,z)$ with $\vect{v}_s$ given by~\eqref{eq:v-vect-irr}
   and $\psi_s(q,z)$ is given by~\eqref{proj-char}.
  Multiplicities from \eqref{ind-mod-decomp} are
 $\hat{N}_s[\vect{n}]=\hat{K}^{(p)}_{s,\vect{n}}(1,1)$ and
 $\bar{N}_s[\vect{n}]=\bar{K}^{(p)}_{s,\vect{n}}(1)$.
\end{Prop}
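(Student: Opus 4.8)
The plan is to obtain \eqref{main-ident} as the graded character of the module decomposition \eqref{ind-mod-decomp-A}, assembled from facts already in place. By \eqref{u-vector} and \eqref{m-vector} the inducing vector is $\vect{u}=\half\vect{m}\theA$, so the shift entering the fermionic formula \eqref{eq:ferm-char-gen} is $\vect{v}=-\vect{u}+\vect{v}_1=-\half\vect{m}\theA+\vect{v}_1$; since $\vect{u}$ is of the form \eqref{tensor-set}, the proposition on fermionic characters of induced modules identifies the left-hand side of \eqref{main-ident} with the normalized character $\chi_{\repM_{\vect{u}}}(q,z)$. First I would take the normalized character of both sides of \eqref{ind-mod-decomp-A}. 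The grading operator $D$ of \eqref{theD} commutes with $\algA(p)$ and hence acts on each multiplicity space, as does the $sl(2)$ Cartan $h$; because $\repV_s[\vect{n}]$ is a trivial $sl(2)$-module it contributes no $z$-dependence. Collecting summands gives
\begin{equation}
 \chi_{\repM_{\vect{u}}}(q,z)=\sum_{s=1}^{p-1}\chi_{\repV_s[\vect{n}]}(q)\,\chi_s(q,z)
  +\sum_{s=1}^{p}\chi_{\repX_s[\vect{n}]}(q,z)\,\psi_s(q,z),
\end{equation}
with $\chi_s=\chi_{\vect{v}_s}$ as in \eqref{eq:v-vect-irr} and $\psi_s$ the projective character \eqref{proj-char}.

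Next I would substitute the characters of the multiplicity spaces. By the proposition identifying these characters with the Kostka-type polynomials one has $\chi_{\repV_s[\vect{n}]}(q)=\bar{K}^{(p)}_{s,\vect{n}}(q^{-1})$ and $\chi_{\repX_s[\vect{n}]}(q,z)=\hat{K}^{(p)}_{s,\vect{n}}(q^{-1},z)$, the inversion $q\mapsto q^{-1}$ reflecting that the $D$-grading runs opposite to $L_0$ (recall $D=L_0-d$). Substituting turns the displayed identity into \eqref{main-ident} verbatim. The multiplicity statement then follows by specialization: $\hat{K}^{(p)}_{s,\vect{n}}$ and $\bar{K}^{(p)}_{s,\vect{n}}$ are honest polynomials, so setting $q=z=1$ is harmless and recovers the total dimensions of the corresponding multiplicity spaces, yielding $\hat{N}_s[\vect{n}]=\hat{K}^{(p)}_{s,\vect{n}}(1,1)$ and $\bar{N}_s[\vect{n}]=\bar{K}^{(p)}_{s,\vect{n}}(1)$.

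With the two preceding propositions in hand the argument above is essentially bookkeeping of the $D$- and $h$-gradings, so the genuine depth sits in the combinatorial identity underlying them, and that is where I expect the main obstacle. If instead one wanted a proof of \eqref{main-ident} that does not presuppose the multiplicity-space characters, the route is to verify it directly as a $q$-series identity. I would expand the fermionic sum \eqref{eq:ferm-char-gen} on the left over a summation vector $\vect{N}$, split $\vect{N}=\vect{s}+\vect{N}'$, and apply a Durfee-rectangle (Gauss) decomposition to each denominator $1/(q)_{N_a}$ to peel off the box indexed by $\vect{s}$. The quadratic form $\half\vect{N}\theA\cdot\vect{N}$ then produces the cross term $\vect{s}\theA\cdot\vect{N}'$, which is exactly what gets absorbed into the shifted $q$-binomial argument $(\half\vect{m}-\vect{s})\theA-\vect{v}_\ell-\vect{v}_1+\vect{s}$ appearing in \eqref{K-proj}; the residual sum over $\vect{N}'$ reassembles into the irreducible characters $\chi_{\vect{v}_s}$, while the $(z+z^{-1})$ factors carried by the $a^\pm$-directions convert pairs $\chi_s,\chi_{p-s}$ into the projective characters $\psi_s$ of \eqref{proj-char}.

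The hardest point is that $\theA$ of \eqref{eq:the-matrix} is degenerate and couples the two $a^\pm$ rows to the $H$ rows, so the peeling must be carried out simultaneously in all $p+1$ directions; one must then check that, after the $a^\pm$ variables are summed out, the remaining sum collapses precisely to the level-restricted Kostka polynomial built from $\bar{\theA}_{ij}=\mathrm{min}(i,j)$ as in \eqref{K-irr}, with the dichotomy $n_p=0$ versus $n_p>0$ emerging from whether the top box can be peeled maximally. I would organize this as an induction on $p$ (or on $|\vect{n}|$) mirroring the recursion of Kostka polynomials. Finally, to close the loop back to the representation-theoretic statement, I would invoke that the irreducible characters $\chi_s$, being graded characters of pairwise non-isomorphic simple modules and distinguished already at leading order by the weights \eqref{decomp}, are linearly independent; this forces the coefficients in \eqref{main-ident} to coincide with the multiplicity-space characters and thereby reconciles the two propositions.
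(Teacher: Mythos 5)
Your first derivation reproduces the paper's surface presentation --- take graded characters of the tilting decomposition \eqref{ind-mod-decomp-A}, insert the fermionic formula \eqref{eq:ferm-char-gen} on the left and the Kostka-type multiplicity characters on the right --- but you should know that the paper does not regard this as a proof, and in fact has no proof of this Proposition at all. In the Conclusions the authors state explicitly that \eqref{the-statement} and \eqref{main-ident} are the two crucial statements of the paper, that at the moment they do not know a proof of them, and that all \emph{other} statements of the paper follow from \eqref{main-ident} ``in more or less standard way.'' This reverses the logical order you assume: the two Propositions you invoke --- that \eqref{eq:ferm-char-gen} gives the character of $\repM_{\vect{u}}$ for $\vect{u}$ of the form \eqref{tensor-set}, and that the characters of $\repX_s[\vect{n}]$, $\repV_s[\vect{n}]$ are $\hat{K}^{(p)}_{s,\vect{n}}(q^{-1},z)$, $\bar{K}^{(p)}_{s,\vect{n}}(q^{-1})$ --- are themselves unproven in the paper and are precisely among the statements that are meant to be \emph{consequences} of \eqref{main-ident}. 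The abelianization discussion around \eqref{K-proj} and Prop.~\ref{prop:mult-coinv} only explains how the formulas were produced: degenerating $\algA(p)$ to $\bar\algA(p)$ gives, a priori, an inequality of characters (coinvariants can only grow under degeneration), not the equality required. So, read against the paper, your first argument is circular: it derives \eqref{main-ident} from statements whose only claimed justification is \eqref{main-ident} itself.

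Your second route --- verifying \eqref{main-ident} directly as a $q$-series identity --- is exactly what the paper says must be done, and you correctly locate the hard combinatorial core there; but as written it is a plan, not a proof. The Durfee-rectangle peeling against the degenerate matrix $\theA$, the collapse of the residual sum to the level-restricted Kostka polynomials with the $n_p=0$ versus $n_p>0$ dichotomy, and the proposed induction are all left unexecuted, and these are the entire content of the statement. One further caution about your closing linear-independence step: the expansion in \eqref{main-ident} does not have automatically unique coefficients, because $\psi_s$ is itself a $z$-dependent combination of $\chi_s$ and $\chi_{p-s}$ by \eqref{proj-char}; linear independence of the $\chi_s$ alone therefore does not force the coefficients of the $\chi_s$ and $\psi_s$ terms to coincide with the multiplicity-space characters, and you would need an extra structural input (for instance degree or positivity constraints separating the $\bar{K}$ and $\hat{K}$ parts) to reconcile a purely combinatorial proof of the identity with the representation-theoretic decomposition \eqref{ind-mod-decomp-A}.
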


\subsection{Multiplicity spaces as coinvariants.}
To comment the main identity for characters \eqref{main-ident}
and fermionic formulas~\eqref{K-proj} and~\eqref{K-irr}, we come back to investigation
of multiplicity spaces $\repX_s[\vect{n}]$ and~$\repV_s[\vect{n}]$.
We fix a vecor $\vect{n}=(n_2,n_3,\dots,n_p)$ with nonnegative integer components.
For the vector $\vect{u}$ given by~\eqref{u-vector},
we define a subalgebra $\algA(p)[\vect{u}]^-\subset\algA(p)$ 
(compare with $\algA(p)[\vect{u}]^+$ in Sec.~\ref{sec:ind-mod}) generated by
 \begin{equation}
  a^\pm_{\vect{w}_\pm-m},\qquad H^{2n}_{\vect{w}_n-m},\quad1\leq n\leq p-1,\quad m\in\oN_0,
 \end{equation}
 where $\vect{w}=-\vect{u}-\vect{\Delta}$.
\begin{Prop}\label{prop:mult-coinv}
 The multiplicity space $\repX_s[\vect{n}]$ can be identified with 
  the space of coinvariants of $\algA(p)[\vect{u}]^-$
 calculated in the module $\qrep{X}_s$, i.e.~$\qrep{X}_s/\algA(p)[\vect{u}]^-\qrep{X}_s$.
\end{Prop}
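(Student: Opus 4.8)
The plan is to derive the statement from the coinvariant machinery recalled in the introduction together with the identification, from Section~\ref{sec:induced}, of $\repM_{\vect{u}}$ with the fusion $\qrep{X}_2^{\dtensor n_2}\dtensor\dots\dtensor\qrep{X}_p^{\dtensor n_p}$ for $\vect{u}=\sum_j n_j\vect{u}_j$. Representability of the coinvariant functor $\Coinv$ says that the multiplicity of a summand in this fusion is read off from a space of coinvariants, so the first step is to realize the projective multiplicity $\repX_s[\vect{n}]$ as a two-point coinvariant carrying $\qrep{X}_s$ at one point and the induced module $\repM_{\vect{u}}$ at another. Since $\repM_{\vect{u}}$ is freely generated from its cyclic vector $\ket{\vect{u}}$ over the complement of $\algA(p)[\vect{u}]^+$, I would evaluate the global currents on $\ket{\vect{u}}$ and thereby collapse this two-point coinvariant to the single-module coinvariant $\qrep{X}_s/\algA(p)[\vect{u}]^-\qrep{X}_s$. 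This collapse is what produces the opposite subalgebra $\algA(p)[\vect{u}]^-$: its generators are the modes dual to those annihilating $\ket{\vect{u}}$, transported to the point carrying $\qrep{X}_s$, which accounts for the shift $\vect{w}=-\vect{u}-\vect{\Delta}$.

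To turn this into an actual isomorphism, and in particular to see that only the projective multiplicity $\repX_s$ and not the simple multiplicity $\repV_s$ survives, I would compute both sides as graded $s\ell(2)$ characters. Since $\qrep{X}_s$ is itself the induced module $\repM_{\vect{u}_s}$, the abelianization of Section~\ref{sec:abel} computes its $\algA(p)[\vect{u}]^-$-coinvariant as a fermionic sum with $q$-binomial coefficients, and I would match this with $\hat{K}^{(p)}_{s,\vect{n}}(q^{-1},z)$, which by the preceding proposition is the character of $\repX_s[\vect{n}]$. The natural transformation furnished by the coinvariant-fusion duality provides a map between the two spaces, and equality of characters in every bidegree upgrades it to the isomorphism $\qrep{X}_s/\algA(p)[\vect{u}]^-\qrep{X}_s\simeq\repX_s[\vect{n}]$; taking $q=z=1$ then recovers $\mathrm{dim}\,\repX_s[\vect{n}]=\hat{K}^{(p)}_{s,\vect{n}}(1,1)$.

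The main obstacle is precisely the disappearance of the simple multiplicity $\repV_s$: a naive adjunction pairs $\qrep{X}_s$ both with the simple summands $\repV_s\boxtimes\qrep{X}_s$ and with the socles $\qrep{X}_s\subset\qrep{P}_s$ of the projective summands, whereas the coinvariant with the specific threshold $\vect{w}=-\vect{u}-\vect{\Delta}$ must retain only the latter. I expect this to be the crux: one has to show that the enlarged family of annihilation modes in $\algA(p)[\vect{u}]^-$ pushes the contribution of the simple summands out of the grading range the coinvariant detects, equivalently that the fermionic character above carries no $\bar{K}^{(p)}_{s,\vect{n}}$ term. Structurally this is the point where the projective-injective (tilting) nature of the $\qrep{P}_s$ in $\catA$ is used: it is what forces a map through a simple summand to be annihilated, leaving exactly the projective summand multiplicity $\repX_s[\vect{n}]$. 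Once this separation is established, the identification of the coinvariant with $\repX_s[\vect{n}]$ follows.
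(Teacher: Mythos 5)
First, a remark on the comparison itself: the paper contains no proof of this proposition. In the Conclusions the authors state explicitly that the crucial statements \eqref{the-statement} and \eqref{main-ident} are unproven, and that all remaining assertions (this one included) would follow from the character identity \eqref{main-ident} ``in a more or less standard way'' along the lines of \cite{[FLT]}. So your proposal must be judged on its own merits, and it has a decisive gap exactly at the step you gloss over fastest. If you place $\repM_{\vect{u}}$ at $0$ and $\qrep{X}_s$ at $\infty$ and integrate out the induced module, the relations that survive on $\qrep{X}_s$ are the annihilator of $\ket{\vect{u}}$ transported by $z\mapsto 1/z$: a generator of dimension $\Delta$ whose modes $a_m$, $m\geq u-\Delta+1$, kill $\ket{\vect{u}}$ contributes the modes $a_n$ with $n\leq -u+\Delta-1$. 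Thus the collapse produces the threshold $-\vect{u}+\vect{\Delta}-\vect{1}$, \emph{not} the paper's $\vect{w}=-\vect{u}-\vect{\Delta}$; the two differ by $2\vect{\Delta}-\vect{1}$ modes per generator. This is not bookkeeping: the coinvariant at threshold $-\vect{u}+\vect{\Delta}-\vect{1}$ is dual to $\mathrm{Hom}_{\algA(p)}(\repM_{\vect{u}},\qrep{X}_s)$, and since every summand of \eqref{ind-mod-decomp} contributes through its head ($\mathrm{Hom}(\qrep{X}_r,\qrep{X}_s)=\mathrm{Hom}(\qrep{P}_r,\qrep{X}_s)=\delta_{rs}\oC$), that Hom space is $\repV_s[\vect{n}]^*\oplus\repX_s[\vect{n}]^*$ — it sees \emph{both} multiplicities. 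So the naive two-point collapse cannot yield the proposition; its entire content lives in the $2\vect{\Delta}-\vect{1}$ shift of the threshold, which your argument neither produces nor exploits. Your heuristic for the crux also runs in the wrong direction: $\algA(p)[\vect{u}]^-$ contains \emph{fewer} modes than the transported annihilator ($-u-\Delta< -u+\Delta-1$), so passing to it makes the coinvariant larger, not smaller; no contribution is ``pushed out of the grading range'' by enlarging the family of annihilation modes.

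The second gap is circularity in the step meant to repair this. To isolate $\repX_s[\vect{n}]$ you invoke ``the preceding proposition'' that $\hat{K}^{(p)}_{s,\vect{n}}(q^{-1},z)$ is the character of $\repX_s[\vect{n}]$, and then match it against the abelianized coinvariant. But in the paper's logical structure that character statement is not independent input: the fermionic formula \eqref{K-proj} is \emph{defined} as the character of the coinvariant of $\bar{\algA}(p)[\vect{u}]^-$ in $\bar{\qrep{X}}_s$, and its identification with the character of the multiplicity space is equivalent to the present proposition together with the unproven identity \eqref{main-ident}. Matching the two therefore assumes the conclusion. A non-circular version of your plan would have to prove \eqref{main-ident} independently — which is precisely the open core of the paper — or give a direct structural argument (presumably a degeneration/flatness argument in the spirit of \cite{[FLT]}, where the two auxiliary points are kept distinct and the specific vanishing orders encode the difference between ``Hom from $\repM_{\vect{u}}$'' and ``multiplicity of $\qrep{P}_s$'') for why the threshold $-\vect{u}-\vect{\Delta}$ selects exactly the projective multiplicities. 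As it stands, the proposal correctly names the crux but does not cross it.
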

We note that under the identification $\repX_s[\vect{n}]\simeq\qrep{X}_s/\algA(p)[\vect{u}]^-\qrep{X}_s$
the natural gradings on these spaces differ by a sign, which leads to $q^{-1}$ in the arguments
of $\hat{K}^{(p)}_{s,\vect{n}}$ and $\bar{K}^{(p)}_{s,\vect{n}}$ in~\eqref{main-ident}.
 The formula~\eqref{K-proj} is obtained with the abelianization procedure. The representation 
 $\qrep{X}_s$ is replaced by the representation $\bar{\qrep{X}}_s$ (induced from the same highest-weight
 conditions) of the algebra $\bar{\algA}(p)$. Then the calculation of 
 coinvariants of $\bar{\algA}(p)[\vect{u}]^-$
 in~$\bar{\qrep{X}}_s$ gives the fermionic formula~\eqref{K-proj}.

We consider a sequence of vectors $\vect{n}$ that tends to a vector $\vect{n}_\infty$ with at least
one component equals to infinity.
Then we have
\begin{description}
             \item[for even $p$]
\begin{equation}
 \hat{K}^{(p)}_{s,\vect{n}}(q,z)\underset{\vect{n}\to\vect{n}_\infty}{\to}
\begin{cases} \chi_s(q,z),& n_2+n_3+\dots+n_p+s\quad\mbox{\rm odd},\\
  0,& n_2+n_3+\dots+n_p+s\quad\mbox{\rm even},
\end{cases}
\end{equation}
             \item[for odd $p$]
\begin{equation}
 \hat{K}^{(p)}_{s,\vect{n}}(q,z)\underset{\vect{n}\to\vect{n}_\infty}{\to}
\begin{cases} \chi^-_s(q,z),& n_2+n_3+\dots+n_p+s\quad\mbox{\rm odd},\\
  \chi^+_s(q,z),& n_2+n_3+\dots+n_p+s\quad\mbox{\rm even},
\end{cases}
\end{equation}
where $\chi^\pm_s(q,z)$ are given by \eqref{chi-W-norm-p} and \eqref{chi-W-norm-m}.
\end{description}
Thus, 
\begin{equation}
\qrep{X}_s=\begin{cases}\lim\limits_{\substack{\vect{n}\to\vect{n}_\infty\\ 
             n_2+n_3+\dots+n_p+s\,\,\mbox{\rm\tiny odd}}} 
       \repX_s[\vect{n}],&\mbox{\rm for even $p$},\\
\lim\limits_{\substack{\vect{n}\to\vect{n}_\infty\\ n_2+n_3+\dots+n_p+s\,\,\mbox{\rm\tiny odd}}} 
       \repX_s[\vect{n}]\bigoplus
     \lim\limits_{\substack{\vect{n}\to\vect{n}_\infty\\ n_2+n_3+\dots+n_p+s\,\,\mbox{\rm\tiny even}}} 
       \repX_s[\vect{n}],&\mbox{\rm for odd $p$}.
\end{cases}
\end{equation}
Taking \eqref{ind-mod-decomp} into account, we obtain that sequence of induced from smaller and smaller 
subalgebra $\algA(p)[\vect{u}]^+$ modules converges to the object in the category~$\catA$
that is the regular $\algA(p)$-bimodule (see a discussion in Conclusions).

Logarithmic $(1,p)$ models and $(p,p-1)$ Virasoro minimal models are in a badly understood
duality. A manifestation of this duality is the fact that $\repV_s[\vect{n}]$ is the space
of coinvariants with respect to a subalgebra of Virasoro in an irreducible module
from $(p,p-1)$ minimal model. For example, for $p=4$, characters of $\repV_s[\vect{n}]$ are given by
\begin{align}
\bar{K}^{(p)}_{1,(m,0,0)}(q)&= \ffrac{1+(-1)^m}{4}\,q^{\frac{m\left(m -4\right)}{8}}\,
    \Bigl( \prod_{j = 1}^{\frac{m}{2}}( q^{j-\frac{1}{2}}+1) + 
    \prod_{j = 1}^{\frac{m}{2} }(q^{j-\frac{1}{2}}-1) \Bigr),\\
\bar{K}^{(p)}_{2,(m,0,0)}(q)&= \ffrac{1-(-1)^m}{2}\,q^{\frac{(m-1)\left(m -3\right)}{8}}\,
      \prod_{j = 1}^{\frac{m-1}{2}}( q^j+1),\\
\bar{K}^{(p)}_{3,(m,0,0)}(q)&= \ffrac{1+(-1)^m}{4}\,q^{\frac{\left(m -2\right)^2}{8}}\,
    \Bigl( \prod_{j = 1}^{\frac{m}{2}}( q^{j-\frac{1}{2}}+1) -
    \prod_{j = 1}^{\frac{m}{2}}(q^{j-\frac{1}{2}}-1) \Bigr).
\end{align}
As $m$ tends to infinity we have
\begin{align}
q^{-\frac{m\left(m -2\right)}{2}}\bar{K}^{(p)}_{1,(2m,0,0)}(q)&\underset{m\to\infty}{\longrightarrow}
  \begin{cases}
   \chi_0(q),&\mbox{\rm $m$ is even},\\
   q^{\half}\chi_{\half}(q),&\mbox{\rm $m$ is odd},
  \end{cases}\\
q^{-\frac{m\left(m -1\right)}{2}}\bar{K}^{(p)}_{2,(2m+1,0,0)}(q)&
\underset{m\to\infty}{\longrightarrow}\chi_{\frac{1}{16}}(q),\\
q^{-\frac{\left(m -1\right)^2}{2}}\bar{K}^{(p)}_{3,(2m,0,0)}(q)&\underset{m\to\infty}{\longrightarrow}
  \begin{cases}
   q^{\half}\chi_{\half}(q),&\mbox{\rm $m$ is even},\\
   \chi_0(q),&\mbox{\rm $m$ is odd},
  \end{cases}
\end{align}
where $\chi_0(q)$, $\chi_{\half}(q)$ and $\chi_{\frac{1}{16}}(q)$ are characters of the 
Ising model irreducible representations with conformal dimensions $0$, $\half$ and $\frac{1}{16}$
respectively.

\subsection{Felder resolution}
The characters of the multiplicity spaces can be obtained in alternative way from
the Felder resolution~\eqref{felder-res-pm}
\begin{equation}\label{felder-compl}
 \to\rep{Y}^+_s\stackrel{F^s}{\to}\rep{Y}^+_{p-s}\stackrel{F^{p-s}}{\to}\qrep{X}_s\to0.
\end{equation}
The multiplicity spaces $\repX_s[\vect{n}]$ are spaces of coinvariants 
in~$\qrep{X}_s$ (Prop.~\ref{prop:mult-coinv}).
We let $\rep{Y}^+_s[\vect{n}]$ denote the coinvariants in $\rep{Y}^+_s$ with respect to
the algebra $\algA(p)[\vect{u}]^-$ with $\vect{u}$ given by~\eqref{u-vector}.
\begin{Conjecture}\label{conj}
 The Felder complex \eqref{felder-compl} remains exact after taking the coinvariants with
  respect to $\algA(p)[\vect{u}]^-$, i.e.{} the complex
\begin{equation}\label{felder-coinv}
 \to\rep{Y}^+_s[\vect{n}]\stackrel{F^s}{\to}\rep{Y}^+_{p-s}[\vect{n}]\stackrel{F^{p-s}}{\to}
\qrep{X}_s[\vect{n}]\to0
\end{equation}
is exact.
\end{Conjecture}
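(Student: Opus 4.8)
The plan is to treat \eqref{felder-coinv} as the image of the Felder resolution \eqref{felder-compl} under the right-exact coinvariant functor $T(-)=(-)/\algA(p)[\vect{u}]^-(-)$, and to show that this functor is exact \emph{on the particular resolving modules} occurring in \eqref{felder-compl}. First I would record the two cheap facts. Since the screenings $F^{s}$, $F^{p-s}$ commute with $\algA(p)$ and hence with the subalgebra $\algA(p)[\vect{u}]^-$, they descend to the coinvariants, so \eqref{felder-coinv} is genuinely a complex; and since $T$ is right exact and $F^{p-s}\colon\rep{Y}^+_{p-s}\to\qrep{X}_s$ is surjective, the terminal arrow $\rep{Y}^+_{p-s}[\vect{n}]\to\qrep{X}_s[\vect{n}]\to0$ is automatically exact. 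All the content is therefore the exactness in the middle, i.e.\ $\mathrm{ker}\,F^{p-s}=\mathrm{im}\,F^{s}$ at every term.

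For the reformulation I would use that $T$ is the zeroth homology $H_0(\algA(p)[\vect{u}]^-,-)$, whose derived functors are the higher homologies $H_i(\algA(p)[\vect{u}]^-,-)$. Because \eqref{felder-compl} is a resolution of $\qrep{X}_s$ by the Verma-type modules $\rep{Y}^+$, a hyperhomology spectral sequence identifies the homology of \eqref{felder-coinv} with $H_i(\algA(p)[\vect{u}]^-,\qrep{X}_s)$ \emph{provided each $\rep{Y}^+_{s}$ is $T$-acyclic}. Thus the conjecture reduces to two acyclicity statements, $H_{i}(\algA(p)[\vect{u}]^-,\rep{Y}^+_{s})=0$ and $H_{i}(\algA(p)[\vect{u}]^-,\qrep{X}_s)=0$ for all $i\ge1$, the first making the spectral sequence collapse and the second killing the surviving homology.

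To prove the vanishing I would pass to the abelianized picture, the paper's main tool. The multifiltration realizing $\bar\algA(p)$ as the associated graded of $\algA(p)$ induces a filtration on the complex \eqref{felder-coinv}, whose associated graded is governed by the abelianized Felder complex $\to\bar{\rep{Y}}^+_s[\vect{n}]\to\bar{\rep{Y}}^+_{p-s}[\vect{n}]\to\bar{\qrep{X}}_s[\vect{n}]\to0$. In each graded component the spaces are finite-dimensional (the coinvariants are finite-dimensional) and the filtration is finite, so the spectral sequence converges; since its differentials can only kill homology, exactness of the abelianized complex \emph{forces} exactness of \eqref{felder-coinv}. This one-directional comparison, together with the fact that abelianization preserves the relevant characters (Sec.~\ref{sec:abel}), is the structural heart of the argument and frees us from controlling the deformation in both directions. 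In the abelianized setting the coinvariants carry explicit monomial bases with $(q,z)$-graded characters given by the fermionic formulas of Sec.~\ref{sec:abel} — the Vermas by \eqref{char-y}, the simple module by \eqref{K-proj} — the subalgebra $\bar\algA(p)[\vect{u}]^-$ acts through quadratic relations so that its homology is computed by a Koszul-type complex, and the degenerated screenings $\bar F$ become combinatorial maps on these bases.

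The main obstacle is precisely this last step: proving exactness of the abelianized Felder complex on coinvariants. Concretely it amounts to categorifying the alternating-sum identity
\begin{equation*}
 \hat{K}^{(p)}_{s,\vect{n}}(q^{-1},z)
   =\sum_{i\ge0}(-1)^{i}\,\mathrm{ch}\,\bar{\rep{Y}}^{+}_{s_i}[\vect{n}](q,z),
\end{equation*}
where $s_i$ alternates between $s$ and $p-s$ along the resolution, into an actual exact sequence, equivalently producing a contracting homotopy or proving the Koszul acyclicity $H_{i\ge1}(\bar\algA(p)[\vect{u}]^-,\bar{\qrep{X}}_s)=0$ directly. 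I expect the numerical (Euler-characteristic) identity itself to be accessible from the fermionic formulas, since each screening is homogeneous of a fixed $(q,z)$-bidegree and, as the cyclic vectors of successive resolution terms have strictly growing conformal weight, the complex is finite in every graded component; the genuinely hard point is upgrading this numerical identity to the vanishing of the homology rather than merely of its alternating sum. A natural route to the upgrade is to exhibit on the abelianized coinvariants a semi-infinite monomial basis adapted to $\bar F$ on which the complex splits into short ``zig-zag'' exact pieces, making the homotopy explicit.
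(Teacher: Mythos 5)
The statement you are addressing is Conjecture~\ref{conj} of the paper: the authors give no proof of it. They state explicitly in the Conclusions that the crucial claims of the paper are unproven and that everything would follow from the character identity \eqref{main-ident} ``in more or less standard way''; the conjecture is then \emph{used} to derive the alternating-sign formula \eqref{felder-4-K}, not derived. So there is no paper proof to compare against, and your proposal must stand on its own. As it stands it is not a proof but a reduction: you yourself flag that exactness of the abelianized Felder complex on coinvariants (equivalently the acyclicity $H_{i\ge 1}(\bar\algA(p)[\vect{u}]^-,\bar{\qrep{X}}_s)=0$) is ``the genuinely hard point'', and you only gesture at a zig-zag basis that might yield a contracting homotopy. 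That step is exactly where all the content of the conjecture lives, so the proposal leaves the conjecture open.

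There is moreover a concrete gap in the reduction itself, at the step you call the structural heart. You assert that the associated graded of the coinvariant complex \eqref{felder-coinv} is the abelianized complex $\to\bar{\rep{Y}}^+_s[\vect{n}]\to\bar{\rep{Y}}^+_{p-s}[\vect{n}]\to\bar{\qrep{X}}_s[\vect{n}]\to 0$. But taking coinvariants does not commute with passing to the associated graded: for a filtered module $\repM$ one only has $\mathrm{gr}\bigl(\algA(p)[\vect{u}]^-\repM\bigr)\supseteq\bar\algA(p)[\vect{u}]^-\,\bar{\repM}$, hence $\mathrm{gr}\bigl(\repM/\algA(p)[\vect{u}]^-\repM\bigr)$ is in general a proper \emph{quotient} of $\bar{\repM}/\bar\algA(p)[\vect{u}]^-\bar{\repM}$. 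Exactness of a complex does not descend to quotient complexes (kill the first term of an exact two-term complex and it ceases to be exact), so even granting exactness of the abelianized complex, your filtration/spectral-sequence argument does not force exactness of \eqref{felder-coinv}. Term-by-term equality of the graded coinvariants with the abelianized coinvariants is precisely the statement that the fermionic formulas of Sec.~\ref{sec:abel} compute the true coinvariant characters, which is essentially the identity \eqref{main-ident} that the paper singles out as its one unproven input. Your argument therefore silently assumes the paper's main open statement; to make this route work you would need to establish the character equality and the abelianized exactness simultaneously (for instance by the upper-bound/lower-bound bootstrap of \cite{[FLT]}), rather than quoting one and deducing the other.
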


The character of $\rep{Y}^+_s[\vect{n}]$ is expressed in terms of $q$-supernomial coefficients,
which are defined as follows.
For $p-1$ dimensional vector $\vect{m}=(m_2,m_3,\dots,m_p)$ and a half integer number
$a=j/2$, $-\vect{e}_p\cdot\bar\theA\vect{m}\leq j\leq\vect{e}_p\cdot\bar\theA\vect{m}$,
we introduce q-supernomial coefficients \cite{SW}
\begin{equation}
  \qsuper{\vect{m}}{a}{q}=\sum_{\substack{j_2,j_3,\dots,j_p\in\oZ\\ j_2+j_3+\dots+j_p=a
     +\half\vect{e}_p\cdot\bar\theA\vect{m}}}
     q^{\sum_{k=2}^{p-1}(\vect{e}_{k+1}\cdot\bar\theA\vect{m}
                  -\vect{e}_{k}\cdot\bar\theA\vect{m}-j_{k+1})j_k}
 \qbin{m_p}{j_p}{q} \qbin{m_{p-1}+j_p}{j_{p-1}}{q}\dots
 \qbin{m_2+j_{3}}{j_2}{q}
\end{equation}
The character $\xi^\pm_s(q,z)$ of $\rep{Y}^\pm_s$ is given by the fermionic formula~\eqref{char-y}.
The fermionic formula for the character $\xi^\pm_s[\vect{n}](q,z)$ of coinvariants $\rep{Y}^+_s[\vect{n}]$
is
\begin{multline}\label{char-coinv-y}
 \xi^\pm_s[\vect{m}](q,z)=
    q^{\frac{p-2-2|\vect{m}|}{4}}\,\sum_{r\in\oZ}
   \sum_{j\in\oNo} 
  z^r q^{\Delta_{r,s}-\Delta_{1,s}+\Delta_{j,s-pr}-\Delta_{1,-s+p(r+1)}}\\
   \left(\qsuper{\vect{m}}{\ffrac{-s+p(j+r)-1}{2}}{q}
  -\qsuper{\vect{m}}{\ffrac{-s+p(j+r)+1}{2}}{q}\right).
\end{multline}
where $\Delta_{r,s}$ is given by~\eqref{d-conf} and $\oNo$ denotes the odd positive integers.
Whenever $m_p>0$ formula~\eqref{char-coinv-y} can be simplified to 
\begin{equation}\label{y-coinv-simpl}
 \xi^\pm_s[\vect{m}](q,z)=\sum_{r\in\oZ}z^r q^{\Delta_{r,s}-\Delta_{1,s}}
   \qsuper{\vect{m}-\vect{e}_p}{-\ffrac{s}{2}+\ffrac{p}{2}r}{q}
\end{equation}
using identity
\begin{equation}\label{K-p-0-super}
\qsuper{\vect{m}-\vect{e}_p}{\ffrac{a}{2}}{q}    =
    q^{\frac{p-2-2|\vect{m}|}{4}}\,
   \sum_{j\in\oNo} 
  q^{\Delta_{j,-a}-\Delta_{1,p+a}}
   \left(\qsuper{\vect{m}}{\ffrac{a+pj-1}{2}}{q}
  -\qsuper{\vect{m}}{\ffrac{a+pj+1}{2}}{q}\right).
  \end{equation}
The fermionic formula for coinvariants 
in $\rep{Y}_s$ with respect to some subalgebra of $\algL(p)$
from~\cite{[FLT]} coincides with~\eqref{y-coinv-simpl} up to notation.

The resolution \eqref{felder-coinv} together with Conjecture~\ref{conj} and the 
formula~\eqref{char-coinv-y} for characters of $\rep{Y}^+_s[\vect{n}]$,
gives an alternating sign formula for the character of $\qrep{X}_s[\vect{n}]$. 
In the following proposition we use notation $P(z)[z^r]$, which means
the coefficient of the Laurent polynomial $P(z)$ in front of $z^r$.
\begin{Prop} For $p-1$ dimensional vector $\vect{m}=(m_2,m_3,\dots,m_p)$, we have
      \begin{multline}\label{felder-4-K}
        \hat{K}^{(p)}_{s,\vect{m}}(q,z)[z^r]=
        q^{\frac{p-2-2|\vect{m}|}{4}- \Delta_{1, s}}\, \sum_{n,j\in\oNo}
        q^{\Delta_{j +r, s}  + \Delta_{n,s- p(j+r)} - \Delta_{1, -s+p(j+r+1)}}\times\\
        \times\left(\qsuper{\vect{m}}{\ffrac{-s-1+p(j+n+r)}{2}}{q}
          -\qsuper{\vect{m}}{\ffrac{-s+1+p(j+n+r)}{2}}{q}\right)-\\
        -q^{\Delta_{j +r+1,p- s} + \Delta_{n,-s- p(j+r)} - \Delta_{1, s+p(j+r+1)}}
       \left(\qsuper{\vect{m}}{\ffrac{s-1+p(j+n+r)}{2}}{q}
            -\qsuper{\vect{m}}{\ffrac{s+1+p(j+n+r)}{2}}{q}\right).
    \end{multline}
\end{Prop}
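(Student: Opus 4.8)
The plan is to read off $\hat{K}^{(p)}_{s,\vect{m}}(q,z)$ as the Euler characteristic of the coinvariant Felder complex \eqref{felder-coinv}. By Proposition~\ref{prop:mult-coinv} the multiplicity space is the coinvariant $\qrep{X}_s/\algA(p)[\vect{u}]^-\qrep{X}_s$, and by Conjecture~\ref{conj} the complex obtained from the two-sided Felder resolution \eqref{felder-res-pm} by passing to $\algA(p)[\vect{u}]^-$-coinvariants is exact. Exactness forces the alternating sum of the characters of its terms to equal the character of $\qrep{X}_s[\vect{n}]$, which by the grading-flip remark after Proposition~\ref{prop:mult-coinv} is $\hat{K}^{(p)}_{s,\vect{n}}(q^{-1},z)$; substituting $q\mapsto q^{-1}$ at the end restores the positive powers of $q$ seen in \eqref{felder-4-K}. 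Each term of the resolution is a coinvariant $\rep{Y}^+_{s'}[\vect{m}]$ whose character is the fermionic expression \eqref{char-coinv-y}, so the whole computation reduces to summing \eqref{char-coinv-y} over the resolution with alternating signs.

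First I would unfold the resolution \eqref{felder-res-pm} explicitly: its terms alternate $\rep{Y}^+_{p-s}$ (even homological degree) and $\rep{Y}^+_s$ (odd homological degree), the screenings $F^{p-s}$ and $F^s$ raising the $\dd\varphi$-momentum by a full lattice step per cycle. The key observation is that the resulting cumulative shift acts on the lattice summation variable $r$ of \eqref{char-coinv-y} precisely as $r\mapsto j+r$ for the $\rep{Y}^+_s$-family and $r\mapsto j+r+1$ for the $\rep{Y}^+_{p-s}$-family, where $j$ records the depth into the resolution. Renaming the internal index of \eqref{char-coinv-y} to $n$ and carrying out these substitutions turns the single series \eqref{char-coinv-y} into exactly the two families of terms of \eqref{felder-4-K}: the family built from $\Delta_{j+r,s}$, $\Delta_{n,s-p(j+r)}$, $\Delta_{1,-s+p(j+r+1)}$ is \eqref{char-coinv-y} with the shift $r\mapsto j+r$, while the family built from $\Delta_{j+r+1,p-s}$, $\Delta_{n,-s-p(j+r)}$, $\Delta_{1,s+p(j+r+1)}$ is \eqref{char-coinv-y} with $s\mapsto p-s$ and $r\mapsto j+r+1$; the relative minus sign is the Euler sign between consecutive terms.

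After extracting the coefficient of $z^r$ I would reorganize the doubly-infinite alternating sum into the double series $\sum_{n,j\in\oNo}$ of \eqref{felder-4-K}. This step also produces the surviving supernomial arguments $\ffrac{\mp s\mp1+p(j+n+r)}{2}$: the substitution $r\mapsto j+r$ sends the pair $\ffrac{-s+p(n+r)\mp1}{2}$ to $\ffrac{-s+p(j+n+r)\mp1}{2}$, and the $s\mapsto p-s$, $r\mapsto j+r+1$ substitution sends its analogue to $\ffrac{s+p(j+n+r)\mp1}{2}$, matching the two differences $\left(\qsuper{\vect{m}}{\cdots}{q}-\qsuper{\vect{m}}{\cdots}{q}\right)$ in \eqref{felder-4-K}. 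Along the way the $q$-supernomial identity \eqref{K-p-0-super}, and its consequence \eqref{y-coinv-simpl} when $m_p>0$, let me keep every term in the canonical supernomial form rather than the theta-like form.

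The main obstacle I expect is the bookkeeping that forces the resolution index $j$ to range precisely over the odd positive integers. Concretely, one must justify that the period-two Felder resolution, summed with its Euler signs and combined with the internal lattice sum of \eqref{char-coinv-y}, rearranges (as a formal power series, after normalization by the prefactor $q^{(p-2-2|\vect{m}|)/4-\Delta_{1,s}}$) into a single antisymmetrized sum in which only the odd shifts survive and the spurious cross terms cancel. Establishing this convergence and cancellation --- essentially verifying the Weyl--Kac-type collapse of the resolution --- is the delicate computational core; once the shifts $r\mapsto j+r$, $r\mapsto j+r+1$ and the odd-integer range are pinned down, the identification with \eqref{felder-4-K} is a direct match of exponents and supernomial indices.
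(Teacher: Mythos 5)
Your proposal follows exactly the paper's own route: the paper obtains \eqref{felder-4-K} precisely by combining the coinvariant Felder resolution \eqref{felder-coinv}, whose exactness is Conjecture~\ref{conj}, with the fermionic character formula \eqref{char-coinv-y} for $\rep{Y}^\pm_s[\vect{m}]$ and taking the alternating (Euler-characteristic) sum, just as you do. The index bookkeeping you describe --- the shifts $r\mapsto j+r$, the substitution $s\mapsto p-s$ with $r\mapsto j+r+1$, and the restriction to odd $j,n$ --- is the same unwinding of the two-periodic resolution that the paper leaves implicit.
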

\begin{Rem}
Whenever $m_p>0$, \eqref{felder-4-K} simplifies to 
\begin{equation}\label{K-s-super}
      \hat{K}^{(p)}_{s,\vect{m}}(q,z)[z^r]=
   \sum_{j\in\oNo}
   q^{\Delta_{j+r,s}-\Delta_{1,s}}
   \qsuper{\vect{m}-\vect{e}_p}{-\ffrac{s}{2}+p\ffrac{j+r}{2}}{q}
   -q^{\Delta_{j+r+1,p-s}-\Delta_{1,s}}
  \qsuper{\vect{m}-\vect{e}_p}{\ffrac{s}{2}+p\ffrac{j+r}{2}}{q},
    \end{equation}
For the Steinberg module,  \eqref{felder-4-K} simplifies to
  \begin{multline}\label{K-pp-super}
    \hat{K}^{(p)}_{p,\vect{m}}(q,z)=
    \sum_{j\in\oN_0} q^{\Delta_{2j+1,p}-\Delta_{1,p}-\frac{|\vect{m}|-p+1}{2}+pj}
   \left(\qsuper{\vect{m}}{pj+\ffrac{p-1}{2}}{q}-\qsuper{\vect{m}}{pj+\ffrac{p+1}{2}}{q}\right)+\\
 +\sum_{r\in\oN}(z^r+z^{-r})\sum_{j\in\oN_0}
 q^{\Delta_{2j+r+1,p}-\Delta_{1,p}-\frac{|\vect{m}|-p+1}{2}+pj+\frac{p}{2}r}\\
   \left(\qsuper{\vect{m}}{pj+\ffrac{p}{2}r+\ffrac{p-1}{2}}{q}
   -\qsuper{\vect{m}}{pj+\ffrac{p}{2}r+\ffrac{p+1}{2}}{q}\right)
  \end{multline}
and whenever $m_p>0$ to
  \begin{equation}\label{K-p-super}
    \hat{K}^{(p)}_{p,\vect{m}}(q,z)=
    \sum_{j=-\vect{e}_p\cdot\bar\theA\vect{m}}^{\vect{e}_p\cdot\bar\theA\vect{m}}
    z^j q^{\Delta_{j+1,p}-\Delta_{1,p}}\qsuper{\vect{m}-\vect{e}_p}{p\ffrac{j}{2}}{q}.
  \end{equation}
\end{Rem}

\section{Conclusions}
In the paper, we studed modules $\repM_{\vect{u}}$ induced from smaller and
smaller subalgebra $\algA(p)[\vect{u}]^+$. The endomorphism algebra $\mathrm{End}(\repM_{\vect{u}})$
of $\repM_{\vect{u}}$ is a subquotient of $\algA(p)$, i.e.{} is the quotient of a subalgebra of 
$\algA(p)$ over a two-side ideal. We note that $\mathrm{End}(\repM_{\vect{u}})$ is finite dimensional
and can be described in quantum group terms. Indeed, $\repM_{\vect{u}}$ corresponds to an object
$\qrep{M}_{\vect{u}}$ in the tensor category~$\catA$ and $\mathrm{End}(\repM_{\vect{u}})$
is isomorphic to endomorphisms of $\qrep{M}_{\vect{u}}$ in~$\catA$. For a sequence of 
vectors $\vect{u}$ with increasing components, algebras $\mathrm{End}(\repM_{\vect{u}})$
approximate $\algA(p)$. Thus, there is a problem to formulate the previous statement precisely,
i.e.{} starting from a family of algebras $\mathrm{End}(\repM_{\vect{u}})$ described in 
tensor category terms to construct the algebra~$\algA(p)$.

The algebra $\mathrm{End}(\repM_{\vect{u}})$ has a complicate structure but it contains the 
operator corresponding to $L_0$ from Virasoro subalgebra in $\algA(p)$.
We calculate the action of this operator in the multiplicity spaces in tensor products.
The action of $L_0$ in the multiplicity spaces is an additional datum to the data of quasitensor
category. We would like to formulate this datum in general terms.
\textit{This additional datum allows us to reconstruct the chiral conformal 
field theory from tensor category.}

Another important problem is a reconstruction of a complete (chiral-antichiral) conformal
field theory. The main object in the complete conformal field theory is a bimodule $\mathbb{P}$,
which admits an action of two commuting copies (one depending on~$z$ and another 
on~$\bar z$) of~$\algA(p)$. In terms of the category~$\catA$ such a bimodule can be constructed in
the following way. Modules $\repM_{\vect{u}}$ form a projective system and the projective limit
gives~$\mathbb{P}$. A module $\repM_{\vect{u}}$ admits the action of $\algA(p)$ corresponding to
the holomorphic sector and therefore the projective limit $\mathbb{P}$ also admits this action.
The action of $\algA(p)$ corresponding to the antiholomorphic sector and commuting with the 
previous one can be defined in $\mathbb{P}$ as well. We described the structure of $\mathbb{P}$
at the level of characters in~\eqref{main-ident}. This $\mathbb{P}$ is the regular bimodule, i.e.{}
it represents the identity functor in the category of $\algA(p)$ modules.

Everything said before this line is based on the two crusial statements 
\eqref{the-statement} and \eqref{main-ident} of the paper. 
At the moment we do not know a proof of
these statements. However, we note that to prove these statements we should only 
check \eqref{main-ident}. All other statements in the paper follows from
\eqref{main-ident} in more or less standard way (see \cite{[FLT]}, where all steps of a similar proof
were done for lattice VOAs).

In \cite{PRZ} a class of lattice models was suggested. Scaling limits of these
models conjecturally coincide with $W$-symmetric logarithmic conformal
field models from \cite{[FGST3]}. Strong arguments that the conjecture is true
was recently obtained in \cite{PPR}, \cite{PR} and \cite{J.R}.
Polynomials $\hat{K}^{(p)}_{s,\vect{m}}(q,z)$ give some finitizations for the characters
of irreducible $W$-modules. It would be very instructive to compare the finitizations
with characters corresponding to finite lattices before taking the scaling limit.
\subsubsection*{Acknowledgments} 
We are grateful to E. Feigin, O.Foda, P. Pearce, J. Rasmussen and S.O. Warnaar for valuable
discussions. The work of
BLF was supported in part by RFBR Grant 08-01-00720, RFBR-CNRS-07-01-92214 and LSS-3472.2008.2.
The work of IYuT was supported in part by LSS-1615.2008.2, the RFBR Grant 08-02-01118 and the
``Dynasty'' foundation.

\end{document}